\documentclass[10pt,a4paper]{article}
\usepackage{url}
\usepackage{amssymb}

\def\pg{\mbox{\rm PG}}
\def\ag{\mbox{\rm AG}}
\def\gf{\mbox{\rm GF}}

\def\C{\mathcal{C}}
\def\S{\mathcal{S}}
\def\P{\mathcal{P}}
\def\B{\mathcal{B}}
\def\I{\mathrel{\mathrm I}}
\def\O{\mathcal{O}}

\def\K{\mathcal{K}}

\def\IS{\S=(\P,\B,\I)}

\newtheorem{theo}{Theorem}
\newtheorem{lemma}[theo]{Lemma}
\newtheorem{cor}[theo]{Corollary}
\newtheorem{definition}[theo]{Definition}

\newtheorem{conjecture}[theo]{Conjecture}

\newenvironment{proof}{\noindent{\bf Proof. }}{\ignorespaces\rule{1pt}{0pt}\hfill $\square$\medskip}

\bibliographystyle{plain}

\begin{document}

\title{On the structure of the directions not determined by a large affine point set\footnote{Final version, to appear in J. Algebraic Combin.}}
\author{Jan De Beule\thanks{This author is a Postdoctoral Fellow of the Research Foundation
Flanders -- Belgium (FWO-Vlaanderen). This author also acknowledges the Research Foundation Flanders -- Belgium (FWO-Vlaanderen)
for a travel grant.},
\addtocounter{footnote}{2}%
P\'eter Sziklai\thanks{Partially supported by OTKA T-49662,67867, K-81310, T\'AMOP, ERC, and Bolyai grants.},%
\addtocounter{footnote}{-3}
 and Marcella Tak\'ats\thanks{This research was partially supported by OTKA K-81310 grant.}}
\date{}

\maketitle
\begin{abstract}
Given a point set $U$ in an $n$-dimensional affine space of size $q^{n-1}-\varepsilon$, we obtain
information on the structure of the set of directions that are not determined by $U$, and we describe an
application in the theory of partial ovoids of certain partial geometries.
\end{abstract}

{\bf keywords:} directions in an affine space, partial ovoids, partial geometries
{\bf MSC (2010)}: 05B25, 51D20, 51E14, 51E20, 51E21.

\section{Introduction}

Let $\pg(n,q)$ and $\ag(n,q)$ denote the projective and the affine
$n$-dimensional space over the finite field $\gf(q)$ of $q$ elements.
Given a point set $U\subset \ag(n,q)\subset \pg(n,q)$, a direction, i.e. a point
$t \in H_\infty = \pg(n,q)\setminus \ag(n,q)$ is {\it determined} by $U$ if
there is an affine line through $t$ which contains at least 2 points of $U$.
Note that if $|U|>q^{n-1}$ then every direction is determined.

Especially in the planar case, many results on extendability of affine point sets 
not determining a given set of directions are known. We mention the following 
theorem from \cite{Szonyi1996}.

\begin{theo}
Let $U \subseteq \ag(2,q)$ be a set of affine points of size $q-\varepsilon > q - \sqrt{q}/2$, which
does not determine a set $D$ of more than $(q+1)/2$ directions. Then $U$ can be extended
to a set of size $q$, not determining the set $D$ of directions.
\end{theo}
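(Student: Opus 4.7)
The natural tool is the R\'edei polynomial. Write $U = \{(a_i, b_i) : 1 \le i \le q-\varepsilon\}$ and set
$$R(X,Y) = \prod_{i=1}^{q-\varepsilon}(X + a_i Y - b_i) \in \gf(q)[X,Y].$$
This is monic of degree $q-\varepsilon$ in $X$, and its coefficient $r_j(Y)$ of $X^{q-\varepsilon-j}$ satisfies $\deg r_j \le j$. A direction $d \in \gf(q)$ is not determined by $U$ if and only if $R(X,d)$ has $q-\varepsilon$ distinct roots in $\gf(q)$, equivalently $R(X,d) \mid X^q - X$. Performing polynomial division in $\gf(q)[Y][X]$ yields
$$X^q - X = R(X,Y)\,f(X,Y) + s(X,Y),\qquad \deg_X f = \varepsilon,\ \deg_X s < q-\varepsilon,$$
and an easy induction on the recursion defining the coefficients of $f$ (using $\deg r_j \le j$) gives $\deg_{X,Y} f \le \varepsilon$. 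For every $d \in D$, $s(X,d) \equiv 0$.

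The plan is to establish the factorisation
$$f(X,Y) = \prod_{j=1}^{\varepsilon}(X + \alpha_j Y - \beta_j),\qquad (\alpha_j, \beta_j) \in \gf(q)^2. \qquad (\ast)$$
Granted $(\ast)$, the set $\tilde U := U \cup \{(\alpha_j, \beta_j) : 1 \le j \le \varepsilon\}$ has size $q$ and R\'edei polynomial $R\cdot f$, and the identity $R(X,d)\,f(X,d) = X^q - X$ for every $d \in D$ shows that each $d \in D$ remains non-determined by $\tilde U$, proving the theorem.

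To prove $(\ast)$ I would analyse the affine curve $\C : f(X,Y) = 0$, which has total degree at most $\varepsilon$. For each $d \in D$, $f(X,d)$ has $\varepsilon$ simple roots in $\gf(q)$ (the ``missing'' R\'edei values of direction $d$), so $\C$ meets the horizontal line $Y=d$ in $\varepsilon$ rational points:
$$\#\{(x,y) \in \C(\gf(q)) : y \in D\} = \varepsilon\,|D| > \varepsilon(q+1)/2.$$
Since $\varepsilon < \sqrt{q}/2$, a curve-points bound of Sz\H{o}nyi type, applied to the non-linear components of $\C$, is strictly weaker than this count; hence $\C$ must contain a $\gf(q)$-rational linear component. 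The equality $\deg_X f = \deg_{X,Y} f = \varepsilon$ forces every irreducible factor of $f$ to have its $X$-degree equal to its total degree (in particular no factor $Y - c$, and no factor $X + p(Y)$ with $\deg p \ge 2$), so this linear component must be of the form $X + \alpha Y - \beta = 0$ with $\alpha, \beta \in \gf(q)$. Factoring it out lowers $\varepsilon$ by one while preserving the hypotheses, and induction on $\varepsilon$ (base case $\varepsilon = 0$ trivial) completes $(\ast)$.

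The main obstacle is this last curve-points step: extracting one R\'edei-type linear component of $\C$ from the rational point count. The threshold $|D| > (q+1)/2$ is precisely calibrated so that the point count on $\C$ overwhelms the number of rational points carried by non-linear irreducible components of a plane curve of total degree $\le \varepsilon < \sqrt{q}/2$, while the identification of the component as $X + \alpha Y - \beta$ is secured by the total-degree bound on $f$.
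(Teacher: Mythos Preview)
The paper does not prove this theorem: it is quoted from Sz\H{o}nyi's 1996 paper as background, and no proof is given here. So there is no ``paper's proof'' to compare against.

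That said, your outline \emph{is} Sz\H{o}nyi's original argument, and the strategy is sound. Two points worth tightening. First, your parametrisation of directions by $d\in\gf(q)$ misses the vertical direction; since $|D|<q+1$ there is at least one determined direction, so choose coordinates to make that one vertical and then $D\subset\gf(q)$, preserving your count $\varepsilon|D|$ exactly. Second, the ``Sz\H{o}nyi-type curve bound'' should be made explicit: for an irreducible factor $g$ of $f$ of degree $\delta\ge 2$, the inherited splitting property gives $\delta|D|$ distinct affine $\gf(q)$-points on $g=0$; if $g$ is not absolutely irreducible this already exceeds the trivial bound $\delta^2/4<q/16$, while if $g$ is absolutely irreducible the Hasse--Weil bound $q+1+(\delta-1)(\delta-2)\sqrt{q}$ is beaten precisely because $\delta\le\varepsilon<\sqrt{q}/2$ and $|D|>(q+1)/2$ (the case $\delta=2$ is the tight one, and is exactly where the hypothesis on $|D|$ bites). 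Your remark that $\deg_X f=\deg_{X,Y}f=\varepsilon$ forces every linear factor to have the shape $X+\alpha Y-\beta$ is correct, and the induction then finishes cleanly; note also that $R(X,d)f(X,d)=X^q-X$ being squarefree for $d\in D$ guarantees the $\varepsilon$ new points are distinct from each other and from $U$, so $|\bar U|=q$ as claimed.
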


An extendability result known for general dimension is the following. Originally, it
was proved in \cite{DBG2008} for $n=3$. A proof for general $n$ can be found
in \cite{Ball:2011}.

\begin{theo}\label{th:DBG_Ball}
Let $q=p^h$, $p$ an odd prime and $h > 1$, and let $U \subseteq \ag(n,q)$, $n \geq 3$,
be a set of affine points of size $q^{n-1}-2$, which does not determine a set $D$ of at least
$p+2$ directions. Then $U$ can be extended to a set of size $q$, not determining the
set $D$ of directions.
\end{theo}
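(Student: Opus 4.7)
The plan is to algebraically recover the two ``missing'' points of the extension from the R\'edei polynomials of $U$ associated with the many undetermined directions in $D$.

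First observe that for each $d\in D$, since $d$ is not determined, every affine line with direction $d$ meets $U$ in at most one point. As there are $q^{n-1}$ such parallel lines and $|U|=q^{n-1}-2$, exactly two of them, $\ell_1(d)$ and $\ell_2(d)$, avoid $U$. Any extension $U'=U\cup\{P,Q\}$ of size $q^{n-1}$ still not determining any $d\in D$ must therefore satisfy, for every $d\in D$, that $\{P,Q\}$ meets $\ell_1(d)\cup\ell_2(d)$ with exactly one point on each line. The task thus reduces to exhibiting a single pair $\{P,Q\}\subseteq\ag(n,q)$ whose parallel projection along direction $d$ onto a transversal hyperplane picks out the two missed lines for every $d\in D$ simultaneously.

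Introduce coordinates so that $H_\infty$ is the hyperplane at infinity, and for $d=(s_1,\ldots,s_{n-1})\in\gf(q)^{n-1}$ form the multivariate R\'edei polynomial
\[
R(X,s_1,\ldots,s_{n-1})=\prod_{u\in U}\bigl(X-u_n-s_1u_1-\cdots-s_{n-1}u_{n-1}\bigr),
\]
whose factorisation encodes the direction structure of $U$. The heart of the proof is a structural result: for every $d\in D$, the ``defect'' between $R(X,d)$ and a lacunary template (of the form $X^q-X$ or a $p$-linearised variant thereof) is a polynomial of degree $2$ in $X$ whose coefficients depend polynomially on $(s_1,\ldots,s_{n-1})$ of total degree at most two. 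Equivalently, the coordinates of the two missed points, viewed as functions of $d$, are polynomials of low enough degree to force them to arise from a single pair of fixed affine points.

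The main technical work, and the main obstacle, is precisely this structural claim. Its proof proceeds by lacunary/linearised polynomial techniques in the setting $q=p^h$, $p$ odd, $h>1$: one shows that the hypothesis $h>1$ forces the R\'edei polynomial of a set of size $q^{n-1}-2$ to have a rigid mod-$p$ factorisation that pins down the defect at every undetermined direction. The bound $|D|\ge p+2$ enters as an interpolation count: it supplies strictly more than $p+1$ specialisations and so excludes any ``$p$-power twisted'' configuration that might satisfy the necessary congruences without corresponding to a genuine pair of affine points. Once the low-degree shape of the defect is established, the coordinates of $P$ and $Q$ can be read off its coefficients, and $U\cup\{P,Q\}$ is directly checked to be an extension of size $q^{n-1}$ not determining any direction of $D$.
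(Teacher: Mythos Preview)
The paper does not give its own proof of Theorem~\ref{th:DBG_Ball}; it is quoted from \cite{DBG2008} (the case $n=3$) and \cite{Ball:2011} (general $n$) as background. There is thus no in-paper proof to compare against directly, but the paper's machinery for its generalisation, Theorem~\ref{general_n}, does handle $\varepsilon=2$ and is the natural point of comparison.

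Your proposal is a strategy sketch rather than a proof, and it has a genuine gap at the decisive step. The first structural claim --- that the ``defect'' between $R(X,d)$ and the lacunary template is quadratic in $X$ with coefficients of total degree at most~$2$ in the direction parameters --- is correct and is essentially what the paper establishes around equation~\eqref{eq:star} via Cramer's rule, producing the polynomial $f$ of total degree $\varepsilon=2$. But your second step, where you ``read off the coordinates of $P$ and $Q$ from its coefficients,'' silently assumes that this quadratic form factors into two linear forms over $\gf(q)$. That is exactly what must be proved, and the paper's Theorem~\ref{general_n} shows it can fail: $f$ may be an irreducible quadric (a cone over a conic or a hyperbolic quadric), and then $U$ is \emph{not} extendable while $N$ is confined to a line or a planar conic. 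Your appeals to $h>1$ forcing a ``rigid mod-$p$ factorisation'' and to $|D|\ge p+2$ as an ``interpolation count'' excluding ``$p$-power twisted'' configurations gesture at the right ingredients, but this is precisely the technical content of \cite{DBG2008} and \cite{Ball:2011}, and you have not carried any of it out. As written, nothing in your argument distinguishes the extendable case from the non-extendable configurations exhibited in Theorem~\ref{general_n}.
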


The natural question is whether Theorem~\ref{th:DBG_Ball} can be improved in the sense
that extendability of sets of size $q^{n-1} -\varepsilon$ is investigated, for $\varepsilon > 2$,
possibly with stronger assumptions on the number of non-determined directions. This general
question seems to be hard for $n \geq 3$, and up to our knowledge, no other result 
different from Theorem~\ref{th:DBG_Ball} is known for $n \geq 3$.

In this paper, we investigate affine point sets of size $q^{n-1} -\varepsilon$, for arbitrary
$\varepsilon$, where the strongest results are obtained when $\varepsilon$ is small.
Instead of formulating an extendability result in terms of the number of non-determined directions,
we formulate it in terms of the structure of the set of non-determined directions.
Finally, we add a section with an application of the obtained theorem.

\section{The main result}

A point of $\pg(n,q)$ is represented by a homogenous $(n+1)$-tuple $(a_0,
a_1,...,a_n) \neq (0,0,\dots,0)$. A hyperplane is the set of points whose
coordinates satisfy a linear equation
\[
a_0X_0 + a_1X_1 + \dots + a_nX_n = 0 \,
\]
and so hyperplanes are represented by a homogenous $(n+1)$-tuple $[a_0,
a_1,...,a_n] \neq [0,0,\dots,0]$. Embed the affine space $\ag(n,q)$ in $\pg(n,q)$
such that the hyperplane $X_0 = 0$, i.e. the hyperplane with coordinates
$[1,0,\dots,0]$ is the hyperplane at infinity of $\ag(n,q)$. Then the points of $\ag(n,q)$
will be coordinatized as $(1, a_1, a_2,...,a_n)$.

The map $\delta$ from the points of $\pg(n,q)$ to its hyperplanes, mapping a point
$(a_0, a_1, a_2,...,a_n)$ to a hyperplane $[a_0,a_1,\dots,a_n]$ is the standard duality of $\pg(n,q)$.

Let $U \subseteq \ag(n,q)$ be an affine point set, $|U|=q^{n-1} - \varepsilon$.  Embed $\ag(n,q)$ in
$\pg(n,q)$ and denote the hyperplane at infinity as $H_{\infty}$. Let $D\subseteq H_\infty$ be the set
of directions determined by $U$ and put $N=H_\infty\setminus D$ the set of non-determined
directions.

\begin{lemma}\label{contnum}
Let $0\leq r\leq n-2$. Let $\alpha=(0,\alpha_1,\alpha_2,\alpha_3,...,\alpha_n) \in N$
be a non-determined direction. Then each of the affine subspaces of dimension $r+1$
through $\alpha$ contain at most $q^{r}$ points of $U$.
\end{lemma}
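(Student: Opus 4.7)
The proof I have in mind is essentially a counting/partitioning argument, and I expect it to be very short; the main content is just unpacking the definition of a non-determined direction.

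The plan is first to recall what the hypothesis on $\alpha$ gives us. Since $\alpha \in N$, no affine line with direction $\alpha$ contains two points of $U$, so every affine line whose projective closure passes through $\alpha$ meets $U$ in at most one point.

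Next, I would analyze the structure of an affine subspace $A$ of dimension $r+1$ whose projective closure contains $\alpha$. The projective closure of $A$ is an $(r+1)$-dimensional projective subspace meeting $H_\infty$ in an $r$-dimensional subspace that contains $\alpha$. Through every affine point of $A$ there passes a unique affine line lying in $A$ and having direction $\alpha$, and these lines partition $A$ into $q^{r+1}/q = q^r$ pairwise parallel affine lines, each with direction $\alpha$.

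To finish, I would apply the first step to each of these $q^r$ parallel lines: each of them meets $U$ in at most one point, since $\alpha$ is not determined by $U$. Summing over the $q^r$ lines in the partition of $A$ yields $|U \cap A| \leq q^r$, which is exactly the claim. The hypothesis $r \leq n-2$ is used only to ensure that the subspace has positive codimension so that such an $A$ through $\alpha$ is a proper affine subspace and the partition argument applies inside $\ag(n,q)$. I do not foresee any real obstacle; the statement is essentially a direct consequence of the definition of a non-determined direction.
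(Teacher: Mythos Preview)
Your argument is correct and is exactly the same as the paper's: partition the affine $(r+1)$-space into the $q^r$ affine lines with direction $\alpha$, each of which meets $U$ in at most one point since $\alpha$ is non-determined. The paper phrases this as a one-line pigeonhole argument, just as you do.
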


\begin{proof}
We prove it by the pigeon hole principle. An affine subspace of dimension $r+1$ through $\alpha$
contains $q^{r}$ affine (disjoint) lines through $\alpha$, and each line contains at most one point of
$U$ as $\alpha$ is a non-determined direction.
\end{proof}

\begin{definition}
If an affine subspace of dimension $r+1\leq n-1$ through $\alpha \in N$ contains less
than $q^r$ points of $U$, then it is called a {\em deficient subspace}. If
it contains $q^r-t$ points of $U$, then its {\em deficiency} is $t$.
\end{definition}

\begin{cor}
\label{epsdefsub}
Let $T \subseteq H_\infty$ be a subspace of dimension $r \leq n-2$ containing $\alpha \in N$.
Then there are precisely $\varepsilon$ deficient subspaces of dimension $r+1$
(counted possibly with multiplicity) through $T$ (a subspace with deficiency $t$ is counted
with multiplicity $t$).
\end{cor}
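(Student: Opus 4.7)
The plan is to count in two ways. First I would observe that the affine $(r+1)$-subspaces ``through $T$'' (i.e., whose projective completion contains $T$) partition the affine point set of $\ag(n,q)$: any affine $(r+1)$-subspace containing $T$ at infinity is determined by $T$ together with any one of its affine points, so two such subspaces sharing an affine point must coincide.

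Next I would count how many such affine subspaces exist. The projective $(r+1)$-subspaces of $\pg(n,q)$ containing $T$ are in bijection with the points of the quotient $\pg(n,q)/T$ of dimension $n-r-1$, giving $(q^{n-r}-1)/(q-1)$ of them; those contained in $H_\infty$ correspond to points of $H_\infty/T$ of dimension $n-r-2$, giving $(q^{n-1-r}-1)/(q-1)$ of them. Subtracting, the number of affine $(r+1)$-subspaces through $T$ is exactly $q^{n-1-r}$. Since each such subspace contains $\alpha \in T$ in its closure, Lemma~\ref{contnum} applies and each contains at most $q^r$ points of $U$.

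Finally I would sum the deficiencies. Write $d_i = q^r - |U \cap A_i|$ for each of the $q^{n-1-r}$ affine $(r+1)$-subspaces $A_i$ through $T$; by the lemma, $d_i \geq 0$. Since the $A_i$ partition $\ag(n,q)$, we have $\sum_i |U \cap A_i| = |U| = q^{n-1} - \varepsilon$, and therefore
\[
\sum_i d_i \;=\; q^{n-1-r} \cdot q^r - |U| \;=\; q^{n-1} - (q^{n-1} - \varepsilon) \;=\; \varepsilon .
\]
This is exactly the claim, since subspaces of deficiency $t$ contribute $t$ to the left-hand side.

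I do not expect a real obstacle here: once the correct count $q^{n-1-r}$ of affine $(r+1)$-subspaces through $T$ is verified and their partition property is noted, the corollary is a direct bookkeeping consequence of Lemma~\ref{contnum}. The only subtlety is making sure one only counts \emph{affine} subspaces (excluding the $(r+1)$-subspaces lying inside $H_\infty$) so that their affine point sets really do partition $\ag(n,q)$.
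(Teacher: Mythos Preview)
Your argument is correct and is exactly the intended one: the paper states this as an immediate corollary of Lemma~\ref{contnum} without writing out a proof, and the partition/double-count you give is precisely the pigeon-hole computation the reader is meant to supply. There is nothing to add.
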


In particular:

\begin{cor}
\label{epsline}
There are precisely $\varepsilon$ affine {\em lines} through $\alpha$ not containing
any point of $U$ (and $q^{n-1}-\varepsilon$ lines with $1$ point of $U$ each).
\end{cor}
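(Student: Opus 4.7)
The plan is to derive this as the base case $r=0$ of Corollary \ref{epsdefsub}. Take $T=\{\alpha\}$, which is a subspace of $H_\infty$ of dimension $r=0$ containing $\alpha$. The affine subspaces of dimension $r+1=1$ through $T$ are exactly the affine lines through the direction $\alpha$, and by Lemma~\ref{contnum} each such line contains at most $q^{0}=1$ point of $U$. Hence a ``deficient'' $1$-space through $T$ is precisely an affine line through $\alpha$ disjoint from $U$, and its deficiency is automatically $1-0=1$. Thus the multiplicity counting in Corollary~\ref{epsdefsub} collapses to ordinary counting, and we get exactly $\varepsilon$ affine lines through $\alpha$ missing $U$.

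For the second part, I would count the affine lines through $\alpha$. Since the $q^n$ affine points of $\ag(n,q)$ are partitioned into parallel classes indexed by the directions in $H_\infty$, and each parallel class through the direction $\alpha$ consists of $q^{n-1}$ disjoint affine lines of length $q$, there are exactly $q^{n-1}$ affine lines through $\alpha$. Subtracting the $\varepsilon$ empty ones leaves $q^{n-1}-\varepsilon$ lines, each carrying either $0$ or $1$ point of $U$; but the empty ones are already accounted for, so these remaining lines carry exactly one $U$-point each. As a sanity check, summing points gives $(q^{n-1}-\varepsilon)\cdot 1+\varepsilon\cdot 0=q^{n-1}-\varepsilon=|U|$, consistent with the fact that every point of $U$ lies on exactly one line through $\alpha$.

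There is no real obstacle here: the statement is essentially the $r=0$ specialization of the preceding corollary, combined with a one-line parallel-class count. The only thing worth being careful about is to make sure that ``deficient of dimension $1$'' really means ``disjoint from $U$'' (not, say, ``with at most one point''), which is immediate from Lemma~\ref{contnum} since a line through a non-determined direction can carry at most one $U$-point to begin with.
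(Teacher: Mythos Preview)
Your proposal is correct and follows exactly the intended route: the paper presents this corollary with the phrase ``In particular:'' immediately after Corollary~\ref{epsdefsub}, signalling precisely the $r=0$ specialisation you carry out. Your added parallel-class count for the second clause is a welcome bit of explicitness that the paper leaves implicit.
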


Now consider the set $U=\{(1,a^i_{1},a^i_{2},a^i_{3}, \dots ,a^i_{n}) : i=1,...,q^{n-1}-\varepsilon\}$.
We define its R\'edei polynomial as follows:
\[
R(X_0,X_1,X_2,...,X_n)=\prod_{i=1}^{q^{n-1}-\varepsilon} (X_0+a^i_{1}X_1+a^i_{2}X_2+...+a^i_{n}
X_n).
\]

The intersection properties of of the set $U$ with hyperplanes of $\pg(n,q)$ are translated into
algebraic properties of the polynomial $R$ as follows. Consider $x_1,x_2,\dots,x_n \in \gf(q)$,
then $x \in \gf(q)$ is a root with multiplicity $m$ of the equation $R(X_0,x_1,x_2,\dots,x_n) = 0$
if and only if the hyperplane $[x,x_1,x_2,\dots,x_n]$ contains $m$ points of $U$.

Define the set $S(X_1,X_2,...,X_n)=\{ a^i_{1}X_1+a^i_{2}X_2+...+a^i_{n}X_n :
i=1, ..., q^{n-1}-\varepsilon\}$, then $R$ can be written as
$$R(X_0,X_1,X_2,...,X_n) = \sum_{j=0}^{q^{n-1}-\varepsilon} \sigma_{q^{n-1}-\varepsilon-j}
(X_1,X_2,...,X_n) X_0^j,$$
where $\sigma_j(X_1,X_2,...,X_n)$ is the $j$-th elementary symmetric polynomial of the set $S
(X_1,X_2,...,X_n)$.

Consider the subspace $s_{x_1,x_2,...,x_n}\subset H_\infty=[1,0,...,0]$ of dimension $n-2$ which is
the intersection of the hyperplanes $[x_0,x_1,x_2,...,x_n], x_0\in\gf(q)$. Suppose that $s_
{x_1,x_2,...,x_n}$ contains an undetermined direction then, by Lemma \ref{contnum}, each of the
hyperplanes different from $H_{\infty}$ through $s_{x_1,x_2,...,x_n}$, contains at most $q^{n-2}$
points of $U$. 
This implies that there are precisely $\varepsilon$ such hyperplanes (counted
with multiplicity) through $s_{x_1,x_2,...,x_n}$ containing less than $q^{n-2}$ points of $U$
(a hyperplane with deficiency $t$ is counted with multiplicity $t$).  Algebraically, this means that
for the $(n-2)$-dimensional subspace $s_{x_1,x_2,...,x_n}$,
\begin{equation}\label{eq:star}
R(X_0,x_1,x_2,...,x_n)f(X_0)= (X_0^q-X_0)^{q^{n-2}}
\end{equation}
where $f(X_0) = X_0^{\varepsilon}+ \sum_{k=1}^{\varepsilon} f_kX_0^{\varepsilon - k}$ is a fully
reducible polynomial of degree $\varepsilon$. Comparing the two sides
of equation~(\ref{eq:star}), one gets linear equations for the coefficients $f_k$ of $f$ in terms of
the $\sigma_j(x_1,\dots,x_n)$, and it is easy to see that the solutions for each $f_k$ are a polynomial
expression in terms of the $\sigma_j(x_1,\dots,x_n)$, $j=1,\dots, k$, use e.g. Cramer's rule to solve
the system of equations, and notice that the determinant in the denominator equals $1$.
The polynomial expression is
independent from the elements $x_1,x_2,\dots,x_n$ (still under the assumption that $s_{x_1,x_2,\dots,x_n}$
does contain an undetermined direction), so we can change them for the variables $X_1,X_2,\dots,X_n$ which makes the
coefficients $f_k$ polynomials in these variables; then
the total degree of each $f_k(\sigma_j(X_1,\dots,X_n)\ :\ j=1,\dots,n)$ is $k$.

Hence, using the polynomial expressions $f_k(\sigma_j : j)$, we can define the polynomial
\begin{equation}\label{eq:f}
f(X_0,X_1,\dots,X_n) = X_0^{\varepsilon}+\sum_{k=1}^{\varepsilon}f_k({\sigma_1,\dots,\sigma_k})X_0^{\varepsilon - k}
\end{equation}

Clearly, $f(X_0,X_1,\dots,X_n)$ is a polynomial of total degree $\varepsilon$, and, substituting
$X_i=x_i$, $i=1,\dots,n$ for which $s_{x_1,\dots,x_n}$ contains an undetermined direction, yields
the polynomial $f(X_0,x_1,\dots,x_n)$ that splits completely into $\varepsilon$ linear factors. Also, since
$f$ contains the term $X_0^{\varepsilon}$, the point $(1,0,0,\dots,0)$ is not a point of the hypersurface.

Suppose now that $f = \prod_{i} \phi_i$, where the polynomials $\phi_i(X_1,\dots,X_n)$ are irreducible of degree
$\varepsilon_i$, $\sum_{i}\varepsilon_i = \varepsilon$. Then each factor inherits the properties
that (i) whenever the subspace $s_{x_1,x_2,...,x_n}\subset H_\infty$ of dimension $n-2$
contains an undetermined direction, then $\phi_i(X_0,x_1,x_2,...,x_n)$ splits into $\varepsilon_i$ linear
factors; and (ii) $(1,0,...,0)$ is not a point of $\phi_i$. So from now on we will think of $f$ as an
irreducible polynomial satisfying (i) and (ii).

$f(X_0,X_1,\dots,X_n)=0$ is an algebraic hypersurface in the dual space $\pg(n,q)$.
Our aim is to prove that it  splits into $\varepsilon$ hyperplanes, or (equivalently) that it contains
a linear factor (i.e. a hyperplane; then we can decrease $\varepsilon$ by one, etc.). Therefore,
we state and prove a series of technical lemmas.

\begin{lemma}\label{defhyperpl}
Let $T \neq H_{\infty}$ be a deficient hyperplane through $\alpha = (\alpha_0,\alpha_1,\dots,\alpha_n)
\in N$ (so $T$ contains less than $q^{n-2}$ points of $U$). Then in the dual space $\pg(n,q)$, $T$
corresponds to an intersection point $t$ of $f$ and the hyperplane $[\alpha_0,\alpha_1,\dots,\alpha_n]$.
\end{lemma}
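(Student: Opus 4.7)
The plan is to translate both halves of the conclusion into explicit conditions on the dual coordinates of $T$, and then to read them off from the definitions and from equation~(\ref{eq:star}). Write $T = [x_0, x_1, \dots, x_n]$; under the duality $\delta$ its dual point is $t = (x_0, x_1, \dots, x_n)$. The incidence $\alpha \in T$ unpacks to $\alpha_0 x_0 + \alpha_1 x_1 + \cdots + \alpha_n x_n = 0$, and this is exactly the condition that $t$ lies on the hyperplane $[\alpha_0, \alpha_1, \dots, \alpha_n]$. So the ``$t$ on the dual hyperplane'' half is immediate from the definition of $\delta$.

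For the ``$t$ on $f$'' half, I first identify the $(n-2)$-subspace $T \cap H_\infty$ with $s_{x_1,\dots,x_n}$: since $T \ne H_\infty$, the intersection $T \cap H_\infty$ has dimension $n-2$, and it contains the common axis of the pencil $\{[x_0',x_1,\dots,x_n] : x_0' \in \gf(q)\}$, so the two subspaces must coincide. In particular, $\alpha \in T \cap H_\infty = s_{x_1,\dots,x_n}$, so this $(n-2)$-subspace contains an undetermined direction, and therefore equation~(\ref{eq:star}) applies verbatim to this particular choice of $(x_1,\dots,x_n)$.

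Next I compare the multiplicity of $X_0 = x_0$ on the two sides of~(\ref{eq:star}). By the multiplicity property of the R\'edei polynomial recalled after its definition, the multiplicity on the left-hand side inside $R(X_0,x_1,\dots,x_n)$ is exactly $|U \cap T|$, which is strictly less than $q^{n-2}$ because $T$ is deficient by hypothesis; on the right-hand side the multiplicity is exactly $q^{n-2}$. Therefore $x_0$ is a root of $f(X_0, x_1, \dots, x_n)$ with multiplicity at least $1$, that is, $f(x_0, x_1, \dots, x_n) = 0$. Combined with the first paragraph, this gives $t \in f \cap [\alpha_0, \dots, \alpha_n]$, as required.

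The argument involves no serious obstacle; the only bookkeeping point is the identification $T \cap H_\infty = s_{x_1,\dots,x_n}$, which is forced by dimension counting, and the passage from the full degree-$\varepsilon$ polynomial to an irreducible factor (which the authors rename $f$ just before the lemma). The latter causes no trouble: the positive multiplicity of $x_0$ in the full polynomial is distributed among its irreducible factors, so that at least one factor vanishes at $t$, and this is the factor relevant to the given deficient hyperplane $T$.
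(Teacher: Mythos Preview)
Your proof is correct and follows essentially the same route as the paper's own argument, only with the details (the identification $T\cap H_\infty=s_{x_1,\dots,x_n}$ and the multiplicity comparison across equation~(\ref{eq:star})) spelled out explicitly where the paper leaves them implicit. Your final remark on the passage to an irreducible factor is a fair observation about an ambiguity in the paper, though the phrase ``this is the factor relevant to the given deficient hyperplane $T$'' is circular as stated; the clean statement is simply that $t$ lies on the full degree-$\varepsilon$ polynomial, hence on at least one irreducible component, which is all that is needed downstream.
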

\begin{proof}
If $T = [x_0,x_2,\dots,x_n]$ is a deficient hyperplane, then $x_0$ is a solution
of the equation $f(X_0,x_1,x_2,\dots,x_n) = 0$, hence, in the dual space $\pg(n,q)$, $t=(x_0,x_1,\dots,x_n)$
is a point of $f$. If  $T$ contains $\alpha = (\alpha_0,\alpha_1,\dots,\alpha_n) \in N$, then $t$ is contained
in the hyperplane $[\alpha_0,\alpha_1,\alpha_2,\dots,\alpha_n]$.
\end{proof}

\begin{lemma}\label{intersection}
Let $(\alpha)\in N$ be a non-determined direction. Then in the dual space $\pg(n,q)$ the
intersection of the hyperplane $[\alpha]$ and $f$ is precisely the union of $\varepsilon$
different subspaces of dimension $n-2$.
\end{lemma}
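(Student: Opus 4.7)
My plan is to exploit the duality of $\pg(n,q)$ together with property (i) to control, on every line in $[\alpha]$ through the distinguished point $P := (1,0,\ldots,0)$, the intersection with $f$, and then to convert this line-by-line control into a complete factorization of $f|_{[\alpha]}$ into $\varepsilon$ linear factors. First I would observe that $F := f \cap [\alpha]$ is a hypersurface of $[\alpha]$ of degree exactly $\varepsilon$: $f$ is irreducible of degree $\varepsilon$, and property (ii) together with $P \in [\alpha]$ (clear, since $\alpha$ has zero first coordinate) give $P \in [\alpha] \setminus f$, so $[\alpha]$ is not a component of $f$. The standard incidence-reversing duality of $\pg(n,q)$ then puts the $(n-2)$-dimensional subspaces $s \subset H_{\infty}$ containing $\alpha$ in bijection with the lines $\ell_s$ of $[\alpha]$ through $P$: $\alpha \in s$ dualizes to $\ell_s \subset [\alpha]$, while $s \subset H_{\infty}$ dualizes to $P \in \ell_s$. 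Using the description $s = s_{x_1,\ldots,x_n}$, the line $\ell_s$ consists of $P$ together with the points $(X_0, x_1, \ldots, x_n)$ for $X_0 \in \gf(q)$, and $f|_{\ell_s}$ is exactly the univariate polynomial $f(X_0, x_1, \ldots, x_n)$.

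Since $\alpha$ lies in $s$ and is a non-determined direction, property (i) forces $f(X_0, x_1, \ldots, x_n)$ to split over $\gf(q)$ into $\varepsilon$ linear factors, so $\ell_s$ meets $F$ in exactly $\varepsilon$ $\gf(q)$-rational points counted with multiplicity---the Bezout maximum, attained since $P \in \ell_s \setminus F$. Summing over the $(q^{n-1}-1)/(q-1)$ lines through $P$ in $[\alpha]$ accounts for $\varepsilon (q^{n-1}-1)/(q-1)$ intersections of $F$ with this pencil of lines, which is exactly the total intersection count of $\varepsilon$ distinct hyperplanes of $[\alpha]$ with the same pencil.

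To conclude, I would upgrade this line-by-line information to a global polynomial factorization. Writing $g := f|_{[\alpha]} = X_0^{\varepsilon} + \sum_{k=1}^{\varepsilon} c_k(\mathbf{Y}) X_0^{\varepsilon-k}$ in coordinates where $\mathbf{Y}$ parametrizes $H_{\infty} \cap [\alpha]$, the total-degree bound on $f$ gives $\deg c_k \leq k$, and the aim is to show that $g = \prod_{i=1}^{\varepsilon}(X_0 - \ell_i(\mathbf{Y}))$ with each $\ell_i$ linear and the $\ell_i$ pairwise distinct. This polynomial factorization is the main obstacle, since the mere count of intersections does not directly imply that the polynomial defining $F$ splits into linear factors in $\gf(q)[X_0, \mathbf{Y}]$. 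I expect it to follow from combining the complete splitting of $g(X_0, \mathbf{y})$ over $\gf(q)$ at every $\mathbf{y} \in \gf(q)^{n-1}$, the degree constraints $\deg c_k \leq k$ (which force the ``root functions'' of $g$ to be at most linear in $\mathbf{Y}$ once they are shown to be polynomial), and a Bezout argument on irreducible components of $F$: any component of degree $d > 1$ would have to meet every line through $P$ in $d$ distinct $\gf(q)$-rational points, a highly restrictive condition one can rule out via a Serre/Sziklai-type bound on the $\gf(q)$-points of irreducible hypersurfaces of $\pg(n-1,q)$. Distinctness of the resulting $\varepsilon$ hyperplanes then follows because a repeated linear factor would be incompatible with the generic transversal behaviour along the pencil $\{\ell_s\}$ already extracted in the previous step.
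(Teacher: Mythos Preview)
Your setup is correct and matches the paper's: the duality identifies $(n-2)$-subspaces $s\subset H_\infty$ through $\alpha$ with lines $\ell_s\subset[\alpha]$ through $P=(1,0,\dots,0)$, and property (i) forces $f|_{\ell_s}$ to split completely into $\varepsilon$ linear factors over $\gf(q)$ for every such line. You also correctly note that $P\in[\alpha]\setminus f$, so $F=f\cap[\alpha]$ is a hypersurface of $[\alpha]$ of degree exactly $\varepsilon$.

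The gap is precisely where you flag it: passing from ``every line through $P$ meets $F$ in $\varepsilon$ points with multiplicity'' to ``$F$ is a union of $\varepsilon$ hyperplanes''. This is not a triviality, and your proposed route via a Serre/Sziklai-type bound is only a sketch; in any case it is not needed. Worse, your distinctness argument at the end does not work: the line-by-line condition counts intersections \emph{with multiplicity}, so a repeated linear factor of $F$ is perfectly compatible with every $\ell_s$ meeting $F$ in $\varepsilon$ points with multiplicity. Nothing in your argument separates the $\varepsilon$ components.

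The paper avoids both difficulties by going back to the geometry of $U$ rather than trying to factor $F$ abstractly. By Corollary~\ref{epsline} there are exactly $\varepsilon$ \emph{distinct} affine lines through $\alpha$ containing no point of $U$. Dualizing, each such line $\ell$ yields an $(n-2)$-dimensional subspace of $[\alpha]$ (not through $P$, since $H_\infty\not\supset\ell$), and these $\varepsilon$ subspaces are pairwise distinct. Every affine hyperplane through $\ell$ contains an empty line through $\alpha$ and hence is deficient, so by Lemma~\ref{defhyperpl} every point of each such subspace lies on $f$. Thus $F$ contains $\varepsilon$ distinct hyperplanes of $[\alpha]$; since $\deg F=\varepsilon$, it equals their union. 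This gives both the linear decomposition and the distinctness in one stroke, with no appeal to bounds on points of irreducible hypersurfaces.
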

\begin{proof}
First notice that
\begin{quote}
If $(0,\alpha_1,\alpha_2,...,\alpha_n)\in H_\infty=[1,0,...,0]$ is an undetermined direction, then for
all the subspaces $s_{x_1,x_2,...,x_n}\subset H_\infty$ of dimension $n-2$
through $(0,\alpha_1,\alpha_2,\alpha_3,...,\alpha_n)$ the polynomial $f(X_0,x_1,x_2,...,x_n)$ has
precisely $\varepsilon$ roots, counted with multiplicity.
\end{quote}
translates to
\begin{quote}
In the hyperplane $[0,\alpha_1,\alpha_2,...,\alpha_n]\ni (1,0,...,0)$, all the lines through
$(1,0,...,0)$ intersect the surface $f(X_0,x_1,x_2,...,x_n)=0$
in precisely $\varepsilon$ points, counted with intersection multiplicity.
\end{quote}
Define $\bar f$ as the surface of degree $\bar{\varepsilon}\leq\varepsilon$, which is the
intersection of $f$ and the hyperplane $[0,\alpha_1,\alpha_2,...,\alpha_n]$. We know that all the
lines through $(1,0,...,0)$ intersect $\bar f$
in precisely $\varepsilon$ points (counted with intersection multiplicity). So if $\bar{f}=\prod_i \bar{\phi}
_i$, where $\bar{\phi}_i$ is irreducible of degree $\bar{\varepsilon}_i$ and $\sum_i \bar{\varepsilon}_i
=\bar{\varepsilon}$, then we have that all the lines through $(1,0,...,0)$ intersect $\bar{\phi}_i$
in precisely $\bar{\varepsilon}_i$ points (counted with intersection multiplicity).

By {Corollary \ref{epsline}} we know that there are precisely $\varepsilon$ different affine lines
through the non-determined direction $(\alpha)$ not containing any point of $U$. In the dual space
$\pg(n,q)$ these lines correspond to $\varepsilon$ different subspaces of dimension $n-2$
contained in the hyperplane $[\alpha]$. The deficient hyperplanes through these $\varepsilon$
original lines correspond to the points of the subspaces in the dual. Hence by {Lemma~ \ref
{defhyperpl}}, all points of these subspaces are in $f$, which means that in $[\alpha]$ there are $
\varepsilon$ different subspaces of dimension $n-2$ totally contained in $f$.
\end{proof}

Now we prove a lemma, which is interesting for its own sake as well.

\begin{lemma}
\label{tanlemma}
Let $f(X_0,...,X_n)$ be a homogeneous polynomial of degree $d<q$.
Suppose that there are $n-1$ independent concurrent lines
$\ell_1, ...,\ell_{n-1}$ through the point $P$ in $\pg(n,q)$  totally
contained in the hypersurface $f=0$. Then the hyperplane spanned by
$\ell_1,...,\ell_{n-1}$ is a tangent hyperplane of $f$.
\end{lemma}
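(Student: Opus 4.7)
The plan is to pick coordinates adapted to the configuration, use the bound $d<q$ to convert the line containments into the vanishing of specific coefficients of $f$, and then read off the tangent hyperplane at $P$ from the gradient $\nabla f(P)$.

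First I would choose a projective frame of $\pg(n,q)$ in which $P=(1,0,\dots,0)$, the standard basis point $e_i=(0,\dots,0,1,0,\dots,0)$ (with the $1$ in the $i$-th slot) lies on $\ell_i$ for $i=1,\dots,n-1$, and therefore the hyperplane $H$ spanned by $\ell_1,\dots,\ell_{n-1}$ is $\{X_n=0\}$. This is legitimate because the $n-1$ lines are independent, so together with $P$ they span an $(n-1)$-dimensional projective subspace.

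Next, I would exploit $\ell_i\subseteq\{f=0\}$. Restricting $f$ to $\ell_i$ gives the homogeneous form $g_i(\lambda,\mu)=f(\lambda e_0+\mu e_i)$ of degree $d$ in two variables, which vanishes at all $q+1$ projective points of $\ell_i$. Since $d<q$, a non-zero homogeneous form of degree $d$ in two variables has at most $d<q+1$ projective zeros, so $g_i\equiv 0$. Equivalently, every monomial of $f$ supported only on the variables $\{X_0,X_i\}$ has coefficient $0$; in particular the coefficient of $X_0^d$, which is $f(P)$, vanishes, and the coefficient of $X_0^{d-1}X_i$, which is $(\partial f/\partial X_i)(P)$, vanishes for each $i=1,\dots,n-1$.

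By Euler's identity $\sum_j X_j\,\partial_j f=d\cdot f$ evaluated at $P$, one obtains $\partial_0 f(P)=d\cdot f(P)=0$ as well. Hence every coordinate of $\nabla f(P)$ except possibly the last one is zero. If $\partial_n f(P)\neq 0$, then $P$ is a smooth point of $f$ and the tangent hyperplane at $P$ is cut out by $\partial_n f(P)\cdot X_n=0$, which is precisely $H$. If $\partial_n f(P)=0$, then $P$ is singular on $f$, every line through $P$ meets $f$ with intersection multiplicity at least $2$, and every hyperplane through $P$ — in particular $H$ — qualifies as a tangent hyperplane. Either way, $H$ is a tangent hyperplane of $f$.

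The main obstacle is merely the bookkeeping between the coefficients of $f$ and its partial derivatives at $P$, together with handling the degenerate singular case; the essential non-trivial ingredient is the hypothesis $d<q$, which upgrades set-theoretic vanishing on $\pg(1,q)$ to identical vanishing of the restricted form.
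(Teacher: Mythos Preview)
Your proof is correct and follows essentially the same route as the paper: choose coordinates so that $P=(1,0,\dots,0)$ and $\ell_i=\langle P,e_i\rangle$, use $d<q$ to force the restriction of $f$ to each $\ell_i$ to vanish identically, and read off that the coefficients of $X_0^d$ and $X_0^{d-1}X_i$ (hence $\partial_{X_i}f(P)$ for $i=0,\dots,n-1$) are zero. The only cosmetic differences are that the paper obtains $\partial_{X_0}f(P)=0$ directly from the absence of an $X_0^d$ term rather than via Euler, and does not spell out the singular case separately.
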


\begin{proof}
Without loss of generality. let $P=(1,0,0,...,0)$ and $\ell_i$ be the ``axis''
$\langle P, \  (\stackrel{0}{1},\stackrel{1}{0},\stackrel{}{0},...,
\stackrel{}{0},\stackrel{i}{1}, \stackrel{}{0},\dots, \stackrel{n}{0})\rangle$, $i=1,...,n-1$.
We want to prove that the hyperplane $x_n=0$, i.e. $[0,...,0,1]$ is
tangent to $f$ at $P$.

Firstly, observe that $\partial_{X_0} f (P)=0$ as
$f$ has no term of type $X_0^d$ since $f(P)=0$.

Now we prove that $\partial_{X_i} f (P)=0$ for all $i=1,...,n-1$.
As $f$ vanishes on $\ell_i$ we have
$f(sX_i,0,...,0,X_i,0,...,0)=0$ for all substitutions to $s$ and $X_i$.
As $f(sX_i,0,...,0,X_i,0,...,0)=X_i^d f_0(s)$ for some $f_0$ with $\deg f_0\leq d<q$, we have
$f_0\equiv 0$.
In particular, $f_0$ has no term of degree $d-1$, so $f$ has no term of type
$X_0^{d-1}X_i$. Hence $\partial_{X_i} f (1,0,0,...,0)=0.$
\end{proof}

\begin{cor}
Let $f(X_0,...,X_n)$ be a homogeneous polynomial of degree $d<q$. Suppose that
in $\pg(n,q)$ the intersection of a hyperplane $H$ and the hypersurface $f=0$
contains two complete subspaces of dimension $n-2$. Then $H$ is a tangent
hyperplane of $f$.
\end{cor}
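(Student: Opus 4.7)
The plan is to reduce the corollary directly to Lemma~\ref{tanlemma} by exhibiting, at a suitable point $P$, a configuration of $n-1$ independent concurrent lines through $P$ all lying in the hypersurface $\{f=0\}$ and spanning $H$.

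First I would let $S_1, S_2 \subset H$ be the two distinct $(n-2)$-dimensional subspaces contained in $\{f=0\}$. Since $H$ has dimension $n-1$, Grassmann's formula (inside $H$) yields $\dim(S_1 \cap S_2) \geq (n-2)+(n-2)-(n-1) = n-3$, and since $S_1 \neq S_2$ this bound is attained; in particular $S_1 \cap S_2$ is non-empty, so I may pick any point $P \in S_1 \cap S_2$.

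Next I would produce the required lines through $P$. Since $S_1$ has dimension $n-2$ and passes through $P$, I can choose $n-2$ independent lines through $P$ inside $S_1$, all of them contained in $S_1 \subseteq \{f=0\}$. Because $S_2 \not\subseteq S_1$, there exists a line $\ell \subset S_2$ through $P$ that does not lie in $S_1$; this line also lies in $\{f=0\}$, and adjoining it to the previous $n-2$ lines gives $n-1$ independent concurrent lines through $P$ whose span equals $H$ (as it properly contains $S_1$ and is contained in $H$). Applying Lemma~\ref{tanlemma} to this configuration shows immediately that $H$ is a tangent hyperplane of $f$.

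I do not expect any real obstacle: the hypothesis $d < q$ is inherited straight from Lemma~\ref{tanlemma}, and the only facts to verify are the Grassmann dimension count and the existence of a line in $S_2 \setminus S_1$ through $P$, both of which are elementary consequences of $S_1 \neq S_2$ and $\dim(S_1 \cap S_2) = n-3$.
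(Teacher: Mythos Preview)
Your proposal is correct and follows exactly the same approach as the paper: pick $P$ in the intersection of the two $(n-2)$-subspaces, take $n-2$ independent lines through $P$ inside one of them and one further line through $P$ inside the other but outside the first, and invoke Lemma~\ref{tanlemma}. You have simply made explicit (via Grassmann and the $S_1\neq S_2$ argument) the steps the paper leaves implicit.
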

\begin{proof}
Choose a point $P$ in the intersection of the two subspaces of dimension
$n-2$, the lines $\ell_1,...,\ell_{n-2}$ through $P$ in one of the subspaces and
$\ell_{n-1}$ through $P$ in the other such that $\ell_1,...,\ell_{n-1}$ be
independent and apply Lemma \ref{tanlemma}.
\end{proof}

\begin{cor}\label{tangent}
If $(\alpha)=(0,\alpha_1,\alpha_2,...,\alpha_n)\in N\subset H_\infty$ is a non-determined direction,
then (in the dual space) the hyperplane
$[\alpha]$ is a tangent hyperplane of $f$. Note that $[\alpha]$ contains $(1,0,...,0)$.
\end{cor}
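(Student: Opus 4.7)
The plan is to deduce this directly from Lemma \ref{intersection} and the Corollary immediately preceding the statement, together with a one-line check for the parenthetical remark. Since $(\alpha)=(0,\alpha_1,\dots,\alpha_n)$ has $\alpha_0=0$, the defining equation of the dual hyperplane $[\alpha]$ reduces to $\alpha_1 X_1 + \cdots + \alpha_n X_n = 0$, which is manifestly satisfied by $(1,0,\dots,0)$, so that observation is free.

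For the tangency claim, I would invoke Lemma \ref{intersection} to obtain $\varepsilon$ distinct $(n-2)$-dimensional subspaces inside $[\alpha]\cap\{f=0\}$. Assuming $\varepsilon\geq 2$, any two of them furnish two distinct complete $(n-2)$-subspaces of $[\alpha]\cap\{f=0\}$, which is exactly the hypothesis of the Corollary that immediately precedes the statement; that Corollary (which itself feeds into Lemma \ref{tanlemma}) then yields that $[\alpha]$ is a tangent hyperplane of $f$. The degree condition $\deg f=\varepsilon<q$ required by Lemma \ref{tanlemma} is satisfied in the small-$\varepsilon$ regime of the paper, so no additional hypothesis needs to be checked.

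The only edge case is $\varepsilon=1$: then $f$ is already a single linear form, so the overarching goal of factoring $f$ into hyperplanes is trivially achieved and the tangency statement is not needed at the inductive step. I therefore do not foresee any genuine obstacle here; both main ingredients (the geometric count of Lemma \ref{intersection} producing many $(n-2)$-subspaces inside $[\alpha]\cap\{f=0\}$, and the algebraic tangency criterion of Lemma \ref{tanlemma} packaged into the preceding Corollary) are already in place, and the proof amounts to assembling them, with only the trivial observation that $(1,0,\dots,0)\in[\alpha]$ added on.
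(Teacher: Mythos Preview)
Your proposal is correct and matches the paper's approach: the corollary is stated in the paper without an explicit proof precisely because it is meant to follow at once from Lemma~\ref{intersection} (supplying $\varepsilon$ distinct $(n-2)$-subspaces in $[\alpha]\cap\{f=0\}$) together with the immediately preceding corollary (two such subspaces force tangency), which is exactly the assembly you describe. Your remarks on the degree hypothesis $\varepsilon<q$ and on the trivial case $\varepsilon=1$ are appropriate side comments but do not change the substance.
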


Now we generalize Theorem~\ref{th:DBG_Ball}.

\begin{theo}\label{general_n}
Let $n \geq 3$. Let $U\subset\ag(n,q)\subset\pg(n,q)$, $|U|=q^{n-1}-2$. Let
$D\subseteq H_\infty$ be the set of directions determined by $U$ and put
$N=H_\infty\setminus D$ the set of non-determined directions. Then $U$ can be
extended to a set $\bar U\supseteq U$, $|\bar U|=q^{n-1}$ determining the same
directions only, or the points of $N$ are collinear and $|N| \leq \lfloor \frac{q+3}{2} \rfloor$,
or the points of $N$ are on a (planar) conic curve.
\end{theo}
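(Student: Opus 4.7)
The plan is to apply the preceding machinery with $\varepsilon=2$ and analyze the resulting polynomial $f(X_0,\dots,X_n)$ of degree $2$ by its factorization. If $f$ has any linear factor, the standard R\'edei-type argument---a linear factor corresponds to an affine point that may be adjoined to $U$ without introducing any new determined direction---allows us to extend $U$: two distinct linear factors give two extension points directly, while $f=\ell^2$ gives one extension plus a reduced problem of degree $1$ providing the second. This yields $|\bar U|=q^{n-1}$ with the same non-determined directions, the first alternative of the theorem.

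We may therefore assume $f$ is an irreducible quadric in the dual $\pg(n,q)$. By Lemma~\ref{intersection}, for every $\alpha\in N$ the restriction $f|_{[\alpha]}$ has rank $2$ as a quadratic form (it factors as the product of two distinct linear forms defining the two $(n-2)$-subspaces); since restriction to a hyperplane cannot drop the rank of a quadratic form by more than two, $f$ itself has rank at most $4$, and irreducibility forces rank at least $3$. Hence $f$ is either (a) a rank-$3$ cone with vertex subspace $A$ of dimension $n-3$ over a smooth conic in a plane, or (b) a rank-$4$ cone with vertex of dimension $n-4$ (empty when $n=3$) over a hyperbolic quadric $Q^+(3,q)$; the elliptic type is excluded because its tangent hyperplanes meet the quadric in a single $\gf(q)$-subspace of multiplicity $2$ rather than in two distinct subspaces. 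Since $P=(1,0,\dots,0)\notin f$ but the vertex $A\subseteq f$, one has $P\notin A$, so the subspace $A^\vee\subseteq\pg(n,q)$ dual to $A$ (of dimension $2$ in case (a) and $3$ in case (b)) meets $H_\infty$ properly.

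Via polarity, the non-determined directions $N$ correspond to the intersection of $A^\vee\cap H_\infty$ with the dual variety of $f$. In case (a) the dual variety is the dual conic of the base conic (a smooth conic in the plane $A^\vee$) and $A^\vee\cap H_\infty$ is a line; their intersection contains at most two points, so $|N|\leq 2$ and $N$ is collinear. In case (b) the dual variety is a hyperbolic quadric $Q^+(3,q)$ in the $3$-space $A^\vee$ and $A^\vee\cap H_\infty$ is a plane; their intersection is a (possibly degenerate) conic in that plane, yielding the third alternative of the theorem.

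The main obstacle is the sharp bound $|N|\leq\lfloor(q+3)/2\rfloor$ in the collinear case: the rank-$3$ analysis already gives $|N|\leq 2$, but collinear $N$ can in principle also arise in case (b) as a subset of one component of a degenerate planar conic (a pair of coplanar lines), with up to $q+1$ points a priori. To obtain the stated bound the plan is to reduce to a planar subproblem by restricting $U$ to a $2$-dimensional affine plane $\pi$ whose line at infinity contains $N$; after an averaging argument ensuring $|U\cap\pi|$ is close to $q$, Sz\H{o}nyi's planar result (the first theorem quoted in the introduction) applied to $U\cap\pi\subseteq\pi$ either bounds $|N|$ as required or provides an extension of $U\cap\pi$ in $\pi$ that can be lifted to an extension of $U$ in $\ag(n,q)$, contradicting the non-extendability assumption. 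This lifting step is the technical heart of the last part of the proof.
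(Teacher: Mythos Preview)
Your overall strategy---linear factor gives extension, otherwise $f$ is an irreducible quadric of rank $3$ or $4$---matches the paper, and your rank-$4$ case (b) is handled correctly and agrees with the paper's conic conclusion.

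The error is in the rank-$3$ case (a). You claim that $N$ lies in the intersection of the line $A^\vee\cap H_\infty$ with the \emph{dual conic} of the base conic $C$, whence $|N|\le 2$. But for a cone with vertex $A$ over $C$, a hyperplane $[\alpha]\supset A$ meets $f$ in two $(n-2)$-subspaces (as Lemma~\ref{intersection} demands) precisely when the line $[\alpha]\cap(\text{base plane})$ is a \emph{secant} of $C$, not a tangent; a tangent line yields a single $(n-2)$-space of multiplicity $2$. Thus the relevant hyperplanes do \emph{not} correspond to points of the dual conic at all. What one must actually count is the number of lines through a fixed point of the base plane (namely the projection of $A$ from $(1,0,\dots,0)$) that meet $C$; this is $\lfloor(q+3)/2\rfloor$, $(q+1)/2$, or $q/2$ according to parity and to whether that point is external or internal to $C$. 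This is exactly how the paper obtains the bound in the collinear alternative: it is the direct output of case (a), not of any degenerate sub-case of (b).

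Consequently your last paragraph is aimed at the wrong difficulty. The planar-restriction-plus-Sz\H{o}nyi argument you sketch is unnecessary: once case (a) is analyzed correctly it already delivers both the collinearity of $N$ (all admissible $[\alpha]$ contain the $(n-2)$-space $\langle A,\,(1,0,\dots,0)\rangle$, so dually the points $(\alpha)$ lie on a fixed line of $H_\infty$) and the bound $|N|\le\lfloor(q+3)/2\rfloor$. Your proposed detour could in fact be made to work---the affine planes through the line at infinity containing $N$ partition $\ag(n,q)$, each carries at most $q$ points of $U$, so at most two are deficient with deficiency at most $2$, and a Sz\H{o}nyi extension inside such a plane lifts to $U$ because every line through the new point in a direction of $N$ stays in that plane---but it is not needed for the theorem.
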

\begin{proof}
Let $n \geq 3$. The hypersurface $f=0$ is a quadric in the projective space $\pg(n,q)$.
We will investigate the hyperplanes through the point
$(1,0,\dots,0)$ that meet $f=0$ in exactly two $(n-2)$-dimensional subspaces. If the quadric $f=0$
contains $(n-2)$-dimensional subspaces, then either $n=3$ and the quadric is hyperbolic, or the
quadric must be singular, since $\lfloor (n-1)/2 \rfloor$ is an
upper bound for the dimension of the generators. If $f=0$ contains $2$ hyperplanes, then $f=0$
is the product of two linear factors, counted with multiplicity. But then, by our remark before Lemma \ref{defhyperpl},
the set $U$ can be extended.
Hence, if we suppose that the set $U$ cannot be extended, the quadric $f=0$ contains $(n-2)$-dimensional
subspaces, so it is a cone with vertex an $(n-3)$-dimensional subspace and base a (planar) conic, or it is
a cone with vertex an $(n-4)$-dimensional subspace and base a hyperbolic quadric in a 3-space. (Note that the second
one contains the case when $n=3$ and $f$ is a hyperbolic quadric itself.) Denote in
both cases the vertex by $V$.

Firstly suppose that $f=0$ has an $(n-3)$-dimensional subspace $V$ as vertex. A hyperplane $[\alpha]$
through $(1,0,\dots,0)$ containing two $(n-2)$-dimensional subspaces must contain $V$ and meets the base conic
in two points (counted with multiplicity). Hence $[\alpha]$ is one of the $(q+1)$ hyperplanes through the span of $\langle (1,0,\dots,0),\ V \rangle$,
so dually, the undetermined direction $(\alpha)$ is a point of the line, which is the intersection of
the dual (plane) of $V$ and $H_\infty$.
 When $q$ is odd, there are $\frac{q+1}{2}$, respectively $\frac{q+3}{2}$ such hyperplanes meeting the base conic,
depending on whether the vertex $V$ is projected from the point $(1,0,\dots,0)$ onto an internal point,
respectively, an external point of the base conic. When $q$ is even, there are $\frac{q}{2}$
such hyperplanes.

Secondly suppose that $f=0$ has an $(n-4)$-dimensional subspace $V$ as vertex. Now a hyperplane
$[\alpha]$ through $(1,0,\dots,0)$ contains $V$ and it meets the base quadric in two lines, i.e. a tangent plane to
this hyperbolic quadric. Hence, $[\alpha]$ is one of the $q^2+q+1$ hyperplanes through the span of $\langle (1,0,\dots,0),\ V \rangle$,
so dually, the undetermined direction $(\alpha)$ is a point of the plane, which is the intersection of
the dual (3-space) of $V$ and $H_\infty$.

Among these hyperplanes only those count, which meet the base hyperbolic quadric in two lines, i.e. those which intersect
the base 3-space in such a tangent plane of the hyperbolic quadric, which goes through the projection of $V$ from the point $(1,0,\dots,0)$.
Dually these hyperplanes form a conic, so $(\alpha)$ is a point of this conic.
\end{proof}

We consider the case when $U$ is extendible as the typical one: otherwise $N$ has a very restricted (strong) structure; although note that 
there exist examples of maximal point sets $U$, of size $q^2-2$, $q \in \{3,5,7,11\}$, not determining the points of a conic at infinity. These 
examples occur in the theory of maximal partial ovoids of generalized quadrangles, and where studied in \cite{DWT2010}, \cite{CDBS2012},
and \cite{DeBeule2012}. Non-existence of such examples for $q=p^h$, $p$ an odd prime, $h > 1$, was shown in \cite{DBG2008}.

Now we prove a general extendability theorem in the 3-space if $\varepsilon<p$.

\begin{theo}
\label{3extend}
Let $U\subset \ag(3,q)\subset \pg(2,q)$, $|U|=q^2-\varepsilon$, where $\varepsilon<p$. Let $D\subseteq H_\infty$ be the set of directions determined by $U$ and put $N=H_\infty\setminus D$ the set of non-determined directions. Then $N$ is contained in a plane curve
of degree $\varepsilon^4-2\varepsilon^3+\varepsilon$ or $U$ can be extended to a set $\bar U\supseteq U$, $|\bar U|=q^2$.
\end{theo}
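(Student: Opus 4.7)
The plan is to realize $N$ as lying on the dual curve of a plane discriminant curve attached to $f$. If $f$ has a linear factor, then the argument of Theorem \ref{general_n} shows that $U$ can be extended; so assume not, and (passing to an irreducible factor as in the discussion above) that $f$ is absolutely irreducible of degree $\varepsilon$. Since $\varepsilon<p$, the partial derivative $\partial_{X_0}f$ is a nonzero polynomial of degree $\varepsilon-1$, and because $f$ is irreducible of strictly larger degree, $f$ and $\partial_{X_0}f$ are coprime. Hence the discriminant
\[
\bar\Delta(X_1,X_2,X_3) := \mathrm{Disc}_{X_0}(f)
\]
is a nonzero homogeneous form of total degree $\varepsilon(\varepsilon-1)$, and it cuts out a plane curve $\bar C\subset H_\infty=\pg(2,q)$ of that degree. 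A point $(x_1,x_2,x_3)\in\bar C$ records the fact that the line $L_{x_1,x_2,x_3}=\langle P_0,(0,x_1,x_2,x_3)\rangle$ through $P_0=(1,0,0,0)$ meets the surface $f=0$ in at least a double point.

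The central step is to show that for every $\alpha\in N$ the line
\[
\alpha^\perp := \{(x_1,x_2,x_3)\in\pg(2,q) : \alpha_1 x_1+\alpha_2 x_2+\alpha_3 x_3=0\}
\]
is tangent to $\bar C$, so that (under the identification of $\pg(2,q)$ with its bidual) $\alpha$ corresponds to a point of the dual curve $\bar C^*$. Fix $\alpha\in N$. By Lemma \ref{intersection}, $[\alpha]\cap\{f=0\}$ is the union of $\varepsilon$ distinct lines $\ell_1,\dots,\ell_\varepsilon$, and the pencil of lines through $P_0$ lying inside $[\alpha]$ is parametrized by $\alpha^\perp$. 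A line $L_{x_1,x_2,x_3}$ in this pencil meets $f$ in exactly $\varepsilon$ points, one on each $\ell_i$, and two of these points collide precisely when $L$ passes through some intersection $\ell_i\cap\ell_j$; there are $\varepsilon(\varepsilon-1)/2$ such coincidence lines (generically). Using $\bar\Delta=\prod_{i<j}(r_i-r_j)^2$, where $r_1,\dots,r_\varepsilon$ are the roots of $f(X_0,x_1,x_2,x_3)$ viewed as a polynomial in $X_0$, each collision makes $\bar\Delta|_{\alpha^\perp}$ vanish to order at least $2$. By B\'ezout the intersection of the line $\alpha^\perp$ with the degree-$\varepsilon(\varepsilon-1)$ curve $\bar C$ has total multiplicity $\varepsilon(\varepsilon-1)$, which is exactly $2\cdot(\varepsilon(\varepsilon-1)/2)$; so every intersection is non-transverse, showing that $\alpha^\perp$ is tangent to $\bar C$ and $\alpha\in\bar C^*$.

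To conclude, I apply the Pl\"ucker-type upper bound $\deg\bar C^*\leq\deg\bar C\cdot(\deg\bar C-1)$ for the degree of the dual of a plane curve. With $\deg\bar C=\varepsilon(\varepsilon-1)$ this yields
\[
\deg\bar C^* \leq \varepsilon(\varepsilon-1)\bigl[\varepsilon(\varepsilon-1)-1\bigr] = \varepsilon^4-2\varepsilon^3+\varepsilon,
\]
and $N\subseteq\bar C^*$ lies on a plane curve of the claimed degree.

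The main obstacle will be the rigorous justification of the tangency claim, in particular controlling the cases where some of the coincidence points $\ell_i\cap\ell_j$ are singular points of $\bar C$ (where the Gauss map is not defined on the nose); one handles these by passing to closure or by arguing directly via the B\'ezout count above. The hypothesis $\varepsilon<p$ is essential throughout: it guarantees $\partial_{X_0}f\ne 0$ and $\bar\Delta\ne 0$, and keeps the Gauss map on $\bar C$ in separable range so that the Pl\"ucker bound applies.
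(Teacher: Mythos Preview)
Your approach is essentially the paper's. Both arguments rest on the same plane curve of degree $\varepsilon(\varepsilon-1)$: your discriminant curve $\bar C=\{\mathrm{Disc}_{X_0}(f)=0\}$ is exactly the paper's $\hat\C$, namely the projection from $(1,0,0,0)$ of the space curve $\C=\{f=0\}\cap\{\partial_{X_0}f=0\}$ (elimination of $X_0$ via the resultant \emph{is} that projection). Both then place $N$ in the dual curve and apply the Pl\"ucker bound $d(d-1)$ with $d=\varepsilon(\varepsilon-1)$.

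The only difference is in how the tangency of $\alpha^\perp$ to this curve is argued. The paper invokes Corollary~\ref{tangent} to see that $[\alpha]$ is a tangent plane of the surface $f=0$ at some point of $\C$, and then observes that tangent planes through $(1,0,0,0)$ project to tangent lines of $\hat\C$. You instead use Lemma~\ref{intersection} to write $f|_{[\alpha]}$ as a product of $\varepsilon$ distinct linear forms and compute directly that $\bar\Delta|_{\alpha^\perp}=\prod_{i<j}(r_i-r_j)^2$ is a perfect square; this is a pleasantly explicit variant of the same step. Both versions leave the same loose end at singular points of $\bar C=\hat\C$, which you correctly flag; the paper glosses over it as well.

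One small slip: after ruling out linear factors and passing to an irreducible factor $\phi_i$ of $f$, its degree is some $\varepsilon_i\le\varepsilon$, not $\varepsilon$. This is harmless for the conclusion, since the resulting bound $\varepsilon_i(\varepsilon_i-1)\bigl[\varepsilon_i(\varepsilon_i-1)-1\bigr]$ is monotone in $\varepsilon_i$ and hence still at most $\varepsilon^4-2\varepsilon^3+\varepsilon$.
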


\begin{proof}
We proceed as before: we define the R\'edei polynomial of $U$, then we calculate $f(X_0,X_1,X_2,X_3)$ of degree $\varepsilon$.

Finally we realize that for each triple $(\alpha,\beta,\gamma)$, if $(0,\alpha,\beta,\gamma)\in N\subset H_\infty$ is an undetermined direction then the plane
$[0,\alpha,\beta,\gamma]$, which apparently goes through the point $(1,0,0,0)$, is a tangent plane of $f$.

The tangent planes of $f$ are of the form
$$[\partial_{X_0} f(a,b,c,d), \partial_{X_1} f(a,b,c,d), \partial_{X_2} f(a,b,c,d), \partial_{X_3} f(a,b,c,d)]$$
where $(a,b,c,d)$ is a smooth point of $f$, and there are some others going through
points of $f$ where $\partial_{X_0} f=\partial_{X_1} f=\partial_{X_2} f=\partial_{X_3} f=0$. For planes of both type
containing $(1,0,0,0)$ we have $\partial_{X_0} f(a,b,c,d)=0$, so we get that the triples
$(\alpha,\beta,\gamma)$, with $(0,\alpha,\beta,\gamma)\in H_\infty$ being an undetermined direction, correspond to tangent
planes $[0,\alpha,\beta,\gamma]$ of $f$ in points $(a,b,c,d)$ which belong to the intersection of $f$ and $\partial_{X_0} f$, which
is a space curve $\C$ of degree $\varepsilon(\varepsilon-1)$. Projecting these tangent planes from $(1,0,0,0)$ (which all they contain)
onto a (fixed) plane we get that in that plane the projected images $[\alpha,\beta,\gamma]$ are tangent lines of the projected image $\hat\C$,
which is a plane curve of degree $\varepsilon(\varepsilon-1)$. So we get that the undetermined directions are contained in a
plane curve of degree $\varepsilon(\varepsilon-1)\Big(\varepsilon(\varepsilon-1)-1\Big)=
\varepsilon^4-2\varepsilon^3+\varepsilon$.
\end{proof}

To reach the total strength of this theory, we would like to use an argument stating
that it is a ``very rare'' situation that in $\pg(n,q)$ a hypersurface $f=0$ with
$d=\deg f>2$ admits a hyperplane $H$ such that the intersection of $H$ and the hypersurface
splits into $d$ linear factors, i.e. $(n-2)$-dimensional subspaces (Totally Reducible Intersection,
TRI hyperplane). We conjecture the following.

\begin{conjecture}
Let $f(X_0, X_1,...,X_n)$ be a homogeneous irreducible polynomial of degree $d>2$ and let
$F$ be the hypersurface in $\pg(n,q)$ determined by $f=0$. Then the number of TRI hyperplanes to $F$ is ``small''
or $F$ is a cone with a low dimensional base.
\end{conjecture}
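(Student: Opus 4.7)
The plan is first to use Lemma~\ref{tanlemma} to reduce the problem to the study of the dual variety $F^\vee$ of $F$. Let $H$ be a TRI hyperplane with $H\cap F=L_1\cup\dots\cup L_d$ (as $(n-2)$-dimensional subspaces, counted with multiplicity). Pick two distinct components $L_i,L_j$; they intersect in an $(n-3)$-dimensional subspace of $H$. At any point $p$ of $L_i\cap L_j$ one can choose $n-2$ independent lines in $L_i$ and one further line in $L_j\setminus L_i$ through $p$, all contained in $F$; Lemma~\ref{tanlemma} then forces $H$ to be tangent to $F$ at $p$. Hence every TRI hyperplane lies in $F^\vee$, and is moreover tangent to $F$ along an entire $(n-3)$-dimensional subspace of points---behaviour that is highly non-generic.

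The next step is to bound the locus of such ``multiply tangent'' hyperplanes. The Gauss map $\gamma\colon F_{\rm sm}\to F^\vee$, sending a smooth point $p\in F$ to its tangent hyperplane $T_pF$, has image the dual variety $F^\vee\subset\pg(n,q)^\vee$; classically, when $F$ is not a cone, $F^\vee$ is a hypersurface of degree bounded by $d(d-1)^{n-1}$. A TRI hyperplane forces $\gamma$ to be constant along an $(n-3)$-dimensional subvariety of $F$, so the corresponding fibre of $\gamma$ is positive-dimensional. These hyperplanes are cut out inside $F^\vee$ by the vanishing of suitable minors of the Hessian of $f$, and a Bezout-style calculation---generalising the degree bound $\varepsilon(\varepsilon-1)(\varepsilon(\varepsilon-1)-1)$ in the proof of Theorem~\ref{3extend}---should give a subvariety of degree $O(d^{c(n)})$, whose number of $\gf(q)$-points one can then bound via Schwartz--Zippel or Lang--Weil, yielding the ``small'' count.

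The principal obstacle, and the reason the statement is left as a conjecture, is twofold. First, pinning down the exceptional class: the defect $\dim F - 1-\dim F^\vee$ can be positive not only for ordinary cones, but also for developable hypersurfaces, scrolls, and projections of Segre, Veronese, or Grassmann varieties, so ``cone with a low-dimensional base'' probably needs to be broadened to ``dually defective hypersurface''. Second, working over $\gf(q)$ rather than an algebraically closed field means that the standard tools from the theory of dual varieties---biduality $F^{\vee\vee}=F$, genericity of the Gauss map, separability of $\gamma$---all require care in positive characteristic; indeed already in the proof of Theorem~\ref{3extend} the polar $\partial_{X_0}f$ could be inseparable, forcing a split by the residual degree of $f$ modulo $p$-th powers. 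Nailing down the correct exceptional family together with the right exponent of $d$ in the ``small'' bound appears to require genuinely new input from projective algebraic geometry beyond the local tangency arguments of this paper.
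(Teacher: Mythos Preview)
The statement you were asked to prove is labelled \emph{Conjecture} in the paper, and the paper offers no proof whatsoever---only the clarifying remarks that ``small'' should mean bounded by some function $r(d,n)$ independent of $q$ (possibly even a constant such as $45$), and that ``low dimensional base'' means at most $3$-dimensional. There is therefore no paper proof to compare your proposal against.

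Your write-up is not a proof either, and you say so yourself: the first paragraph is a sound observation (every TRI hyperplane is tangent along an $(n-3)$-subspace, via Lemma~\ref{tanlemma}), but the second paragraph is a heuristic (``a Bezout-style calculation \dots\ should give'') and the third paragraph is an explicit list of obstructions (dual defect not only for cones, inseparability of the Gauss map in positive characteristic, the need to broaden the exceptional class). What you have produced is a reasonable research outline for attacking the conjecture, not a proof of it. If the assignment was to supply a proof, the honest answer is that none is known; if it was to discuss the conjecture, your outline is sensible but should be presented as such rather than as a ``proof proposal''.
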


By small we mean the existence of a function (upper bound) $r(d,n)$, which is independent from $q$; although
we would not be surprised if even a constant upper bound, for instance $r(d,n)=45$ would hold in general.
By a low dimensional base of a cone we mean an at most 3-dimensional base.

We remark finally that such a result would immediately imply extendability of direction sets $U$ under very general conditions.

\section{An application}

A (finite) \emph{partial geometry}, introduced by Bose \cite{Bose1963}, is an incidence structure
$\IS$ in which $\P$ and $\B$ are disjoint non-empty sets of objects called
points and lines (respectively), and for which $\I \subseteq (\P \times
\B) \cup (\B \times \P)$ is a symmetric point-line incidence relation
satisfying the following axioms:
\begin{itemize}
\item[(i)] Each point is incident with $1+t$ lines $(t \geqslant 1)$ and
two distinct points are incident with at most one line.
\item[(ii)] Each line is incident with $1+s$ points $(s \geqslant 1)$ and
two distinct lines are incident with at most one point.
\item[(iii)] There exists a fixed integer $\alpha > 0$, such that if $x$ is a point and 
$L$ is a line not incident with $x$, then there are exactly $\alpha$ pairs 
$(y_i,M_i) \in \P \times \B$ for which $x \I M_i \I y_i \I L$.
\end{itemize}
The integers $s$, $t$ and $\alpha$ are the parameters of $\S$. The \emph{dual} $\S^D$ of a
partial geometry $\IS$ is the incidence structure $(\B,\P,\I)$. It is a partial geometry
with parameters $s^D = t$, $t^D = s$, $\alpha^D = \alpha$.

If $\S$ is a partial geometry with parameters $s$, $t$ and $\alpha$, then 
$|\P| = (s+1)\frac{(st+\alpha)}{\alpha}$ and $|\B| = (t+1)\frac{(st+\alpha)}{\alpha}$. 
(see e.g. \cite{DCVM1995}). A partial geometry with parameters $s,t$, and $\alpha=1$, is a 
{\em generalized quadrangle} of order $(s,t)$, \cite{PT:2009}.

To describe a class of partial geometries of our interest, we need special pointsets
in $\pg(2,q)$. An {\em arc of degree $d$} of a projective plane $\Pi$ of order $s$ is a set $\K$ of points such that
every line of $\Pi$ meets $\K$ in at most $d$ points. If $\K$ contains $k$ points,
than it is also called a $\{k,d\}$-arc. The size of an arc of degree $d$ can not exceed $ds-s+d$.
A $\{k,d\}$-arc $\K$ for which $k=ds-s+d$, or equivalently, such that every line that meets $\K$,
meets $\K$ in exactly $d$ points, is called {\em maximal}. We call a $\{1,1\}$-arc and a $\{s^2,s\}$-arc {\em trivial}. 
The latter is necessarily the set of $s^2$ points of $\Pi$ not on a chosen line.

A typical example, in $\pg(2,q)$, is a conic, which is a $\{q+1,2\}$-arc, which is not maximal, and it is well known 
that if $q$ is even, a conic, together with its 
nucleus, is a $\{q+2,2\}$-arc, which is maximal. We mention that a $\{q+1,2\}$-arc in $\pg(2,q)$ is also called
an {\em oval}, and a $\{q+2,2\}$-arc in $\pg(2,q)$ is also called a {\em hyperoval}. When $q$ is odd, all ovals are
conics, and no $\{q+2,2\}$-arcs exist (\cite{Segre1955}). When $q$ is even, every oval has a nucleus, and 
so can be extended to a hyperoval. Much more examples of hyperovals, different from a conic and its 
nucleus, are known, see e.g. \cite{DCD:2011}.  We mention the following two
general theorems on $\{k,d\}$-arcs.

\begin{theo}[\cite{Cossu1961}]\label{th:arcs:dual}
Let $\K$ be a $\{ds-s+d,d\}$-arc in a projective plane of order $s$. Then the set of lines external to $\K$ is a 
$\{s(s-d+1)/d,s/d\}$-arc in the dual plane.
\end{theo}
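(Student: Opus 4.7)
The plan is to establish both parts of the conclusion (the size and the degree of the dual object) by elementary double-counting in $\pg(2,s)$, using only the defining property of maximality of $\K$: since $|\K| = ds-s+d$ meets the upper bound, every line that meets $\K$ must meet it in exactly $d$ points (any line with $1 \le i < d$ points of $\K$ would force some other line to exceed $d$, or equivalently the bound $ds-s+d$ would not be attained). I would carry out the argument in two small counts and then dualize.

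First I would count the total number of lines external to $\K$. Through any point $P \in \K$, each of the $s+1$ lines partitions the remaining $|\K|-1 = (d-1)(s+1)$ points of $\K$ into $s+1$ groups of size $d-1$, so every line through $P$ is a $d$-secant; there are therefore no external lines through a point of $\K$, and the total number of $d$-secants equals $|\K|(s+1)/d = (ds-s+d)(s+1)/d$. Subtracting from $s^2+s+1$ gives, after simplification, exactly $s(s-d+1)/d$ external lines, which is the required cardinality.

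Next I would count external lines through a fixed point $P \notin \K$. The $s+1$ lines through $P$ partition $\K$, and each meets $\K$ in $0$ or $d$ points; if $x$ is the number of $d$-secants through $P$, then $xd = |\K| = ds-s+d$, giving $x = s+1 - s/d$ (which, as a byproduct, forces $d \mid s$). Hence there are exactly $s/d$ external lines through every $P \notin \K$, and $0$ through every $P \in \K$.

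To finish, I would translate to the dual plane: a line of the dual plane is a pencil of lines of $\pg(2,s)$ through some point $P$, so the above counts show that the set $\L$ of external lines, viewed as a point set in the dual plane, meets every dual line in either $0$ or $s/d$ points. In particular $\L$ is an arc of degree $s/d$, and combined with $|\L| = s(s-d+1)/d$ this is the desired conclusion; a quick check that $(s/d)s - s + (s/d) = s(s-d+1)/d$ confirms that $\L$ is in fact itself a maximal arc in the dual plane. There is no real obstacle here — the only nontrivial ingredient is the observation that maximality forces every secant to have size exactly $d$, after which the rest is arithmetic.
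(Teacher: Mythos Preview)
Your argument is correct. The paper does not actually supply a proof of this theorem: it is quoted as a known result with a reference to Cossu (1961), and is used only for the consequence that $d\mid s$. So there is no ``paper's proof'' to compare against; your write-up is precisely the classical double-counting proof that underlies the cited result. The one point worth making explicit is the justification of ``every secant is a $d$-secant'': for $P\in\K$ the $s+1$ lines through $P$ cover the remaining $|\K|-1=(d-1)(s+1)$ points in blocks of size at most $d-1$, so equality forces each block to have size exactly $d-1$; you state this but could unpack it in one line. Everything else --- the count of $d$-secants, the subtraction giving $s(s-d+1)/d$ external lines, the pencil count giving exactly $s/d$ external lines through each $P\notin\K$, and the dual interpretation --- is clean and complete.
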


As a consequence, $d \mid s$ is a necessary condition for the existence of maximal $\{k,d\}$-arcs in a 
projective plane of order $s$. The results for the Desarguesian plane $\pg(2,q)$ are much stronger.
Denniston \cite{Denniston1969} showed that this condition is sufficient for the existence of maximal $\{k,d\}$-arcs
in $\pg(2,q)$, $q$ even. Blokhuis, Ball and Mazzocca \cite{BBM1997} showed that non-trivial maximal $\{k,d\}$-arcs in $\pg(2,q)$ 
do not exist when $q$ is odd. Hence, the existence of maximal arcs in $\pg(2,q)$ can be summarized in the following
theorem.

\begin{theo}\label{th:arcs:odd}
Non-trivial maximal $\{k,d\}$-arcs in $\pg(2,q)$ exist if and only if $q$ is even.
\end{theo}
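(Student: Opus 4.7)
The plan is to establish the two directions of the biconditional separately, as each is a substantial cited result.

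For ``$q$ even implies existence'' (sufficiency), I would give Denniston's pencil-of-conics construction explicitly. Writing $q=2^h$, $d=2^s$ with $1\leq s<h$, fix a quadratic form $Q(X,Y)=X^2+bXY+aY^2$ that is anisotropic over $\gf(q)$. In characteristic $2$ the smooth conics $C_\lambda:Q(X,Y)=\lambda Z^2$, $\lambda\in\gf(q)^*$, all share the common nucleus $N=(0,0,1)$, have no point at infinity (since $Q$ is anisotropic), and together with $\{N\}$ partition the affine plane. Choose an additive subgroup $H\leq(\gf(q),+)$ of order $d$ (possible since $d\mid q$ and $q$ is a power of $2$) and set $\K=\{N\}\cup\bigcup_{\lambda\in H\setminus\{0\}}C_\lambda$. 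Then $|\K|=1+(d-1)(q+1)=(d-1)q+d$, which is the maximal size $ds-s+d$ with $s=q$. The crucial verification is that every line meets $\K$ in $0$ or $d$ points: restricting $Q$ to an affine line gives a quadratic map $\gf(q)\to\gf(q)$ whose fibres each have a controlled size, and additive closure of $H$ forces the preimage of $H$ under this map to have cardinality $0$ or $d$.

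For ``$q$ odd implies non-existence'' (necessity), I would invoke the Blokhuis--Ball--Mazzocca theorem of \cite{BBM1997}; this is the genuinely hard part, and the reason the problem resisted proof for nearly thirty years after Denniston's construction. The strategy is to assume a non-trivial maximal $\{k,d\}$-arc $\K$ in $\pg(2,q)$ with $q$ odd and $1<d<q$, to remove a carefully chosen external line of $\K$ (one exists by Theorem~\ref{th:arcs:dual}), and to analyse the R\'edei polynomial of the resulting affine point set. The $\{0,d\}$-intersection property of lines with $\K$ forces a very restrictive lacunary factorisation of this R\'edei polynomial; combining Sz\H{o}nyi-style results on fully reducible lacunary polynomials with careful counting of secant and external lines yields an algebraic identity over $\gf(q)$ whose coefficient-wise analysis produces a contradiction in odd characteristic (the obstruction degenerates in characteristic two, consistently with the Denniston construction being available there).

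Combining the two steps establishes the biconditional, with the main obstacle lying entirely in the odd-$q$ direction: sufficiency is a clean explicit construction, while necessity is the technical core of \cite{BBM1997}.
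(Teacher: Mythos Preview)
Your approach matches the paper's: the paper does not actually prove this theorem but states it as a summary of the cited results of Denniston~\cite{Denniston1969} (existence for $q$ even) and Ball--Blokhuis--Mazzocca~\cite{BBM1997} (non-existence for $q$ odd), precisely the two ingredients you invoke. Your sketches of the pencil-of-conics construction and of the R\'edei-polynomial strategy go further than the paper, which offers no detail at all, but the logical decomposition is identical.
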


Several infinite families and constructions of maximal $\{k,d\}$-arcs of $\pg(2,q)$, $q=2^h$, and $d=2^e$, $1 \leq e \leq h$, 
are known. We refer to \cite{DCD:2011} for an overview.

Let $q$ be even and let $\K$ be a maximal $\{k,d\}$-arc of $\pg(2,q)$. We define the incidence structure $T_2^*(\K)$ as
follows. Embed $\pg(2,q)$ as a hyperplane $H_{\infty}$ in $\pg(3,q)$. The points of $\S$ are the points of $\pg(3,q)\setminus H_{\infty}$. The 
lines of $\S$ are the lines of $\pg(3,q)$ not contained in $H_{\infty}$, and meeting $H_{\infty}$ in a point of $\K$. The incidence
is the natural incidence of $\pg(3,q)$. One easily checks  
that $T_2^*(\K)$ is a partial geometry with parameters $s=q-1$, $t=k-1=(d-1)(q+1)$, and $\alpha = d-1$.

An {\em ovoid} of a partial geometry $\IS$ is a set $\O$ of points of $\S$, such that every line of $\S$ meets $\O$ in 
exactly one point. Necessarily, an ovoid contains $\frac{st}{\alpha}+1$ points. Different examples of partial geometries exist, 
and some of them have no ovoids, see e.g. \cite{DCDFG1991}. The partial geometry $T_2^*(\K)$ has always an ovoid. 
Consider any plane  $\pi \neq H_{\infty}$ meeting $H_{\infty}$ in a line skew to $\K$. The plane $\pi$ then contains 
$\frac{st}{\alpha}+1=q^2$ points of $\S$, and clearly every line of $\S$ meets $\pi$ in exactly one point.

It is a natural stability question to investigate {\em extendability} of point sets of size slightly smaller than the size of an ovoid. In this
case, the question is whether a set of points $\B$, with the property that every line meets $\B$ in at most one point, can be extended 
to an ovoid if $|\B| = q^2 - \varepsilon$, and $\varepsilon$ is {\em not too big}. Such a point set $\B$ is called a {\em partial ovoid}, 
$\varepsilon$ its {\em deficiency}, and it is called {\em maximal} if it cannot be extended. The following theorem is from \cite{PT:2009} 
and deals with this question in general for GQs, i.e. for $\alpha=1$. 

\begin{theo}\label{th:partial}
Consider a GQ of order $(s,t)$. Any partial ovoid of size $(st-\rho)$, with $0 \leq \rho < t/s$ is
contained in a uniquely defined ovoid.
\end{theo}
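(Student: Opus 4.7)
The plan is to show that the partial ovoid $B$ of size $st-\rho$ extends uniquely to an ovoid by producing the missing $\rho+1$ points as ``pivots'' of pencils of lines that miss $B$.

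I would first count the lines disjoint from $B$, call them \emph{free lines}. Since the $st-\rho$ points of $B$ are pairwise non-collinear (as $B$ is a partial ovoid), they lie on $(st-\rho)(t+1)$ distinct lines, so subtracting from the total $(st+1)(t+1)$ lines there are exactly $(\rho+1)(t+1)$ free lines. Next, fix a free line $\ell$ and invoke axiom~(iii) with $\alpha=1$: for each $y\in B$ there is a unique line through $y$ meeting $\ell$, at a point $P_y\in\ell$. Setting $\beta(P):=|\{y\in B:y\sim P\}|$ for $P\in\ell$, this yields
\[
\sum_{P\in\ell}\beta(P)=|B|=st-\rho,
\]
so among the $(s+1)t$ lines through points of $\ell$ other than $\ell$ itself, exactly $t+\rho$ are free.

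A point $P\in\ell$ is \emph{addable} when $\beta(P)=0$; then all $t+1$ lines through $P$ are free and $B\cup\{P\}$ is a larger partial ovoid. For uniqueness of an addable point on $\ell$: if $\ell$ carried two distinct addable points $P_1,P_2$, the projection $y\mapsto P_y$ would land in $\ell\setminus\{P_1,P_2\}$, a set of $s-1$ points, forcing $|B|\le(s-1)t$ and hence $\rho\ge t$; but $\rho<t/s\le t$ (for $s\ge 1$), a contradiction. Hence each free line carries at most one addable point.

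The key step, and main obstacle, is to show that each free line $\ell$ in fact contains \emph{at least} one addable point. Assume for contradiction that $\beta(P)\ge1$ for every $P\in\ell$. Pick one tangent line $L_P$ through each $P\in\ell$; these $s+1$ lines are pairwise distinct, since any two through different points of $\ell$ would coincide with $\ell$ itself, impossible as $\ell$ is free. Combining this with the identity $\sum_P\beta(P)=st-\rho$, and iterating the axiom~(iii) projection onto a second free line (one exists since $(\rho+1)(t+1)\ge t+1\ge 2$), one produces enough tangents concurrent with $\ell$ to force $\rho\ge t/s$, contradicting the hypothesis. The delicate point is turning the triangle-free structure of a GQ into the right counting inequality; this is where the sharp bound $\rho<t/s$ is used.

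Once each free line has exactly one addable point, the $(\rho+1)(t+1)$ free lines partition into $\rho+1$ pencils of $t+1$ lines each, through $\rho+1$ distinct addable pivots. These pivots are pairwise non-collinear (else the joining line would be free and would carry two addable points). Adjoining the pivots to $B$ therefore gives $st+1$ pairwise non-collinear points, hence an ovoid $\bar B$. Uniqueness follows because any ovoid $O\supseteq B$ must meet every free line exactly once, and by the uniqueness of the addable point per free line, $O\setminus B$ is forced to be precisely the set of $\rho+1$ pivots, so $O=\bar B$.
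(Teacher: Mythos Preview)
The paper does not give its own proof of this theorem: it is simply quoted from Payne--Thas \cite{PT:2009}. So there is no in-paper argument to compare against, and the relevant question is whether your sketch stands on its own.

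Your overall architecture (count the $(\rho+1)(t+1)$ free lines; show each free line carries exactly one addable point; collect the resulting $\rho+1$ pivots into the unique completing ovoid) is exactly the standard one, and your steps on counting, on uniqueness of the addable point per free line, and on assembling the ovoid at the end are all fine.

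The genuine gap is your ``key step'': existence of an addable point on every free line. What you wrote there is a statement of intent, not an argument. You produce $s+1$ tangents $L_P$ and then say that ``iterating the projection onto a second free line \ldots forces $\rho\ge t/s$'', but nothing is actually counted. Here is what is missing. Fix any point $Q$ on a free line and let $k=t+1-\beta(Q)$ be the number of free lines $\ell_1,\dots,\ell_k$ through $Q$. For each $\ell_i$ the usual projection gives exactly $t+\rho$ free lines $\neq\ell_i$ meeting $\ell_i$, of which $k-1$ pass through $Q$; the remaining $t+\rho-k+1$ meet $\ell_i$ off $Q$, and by the triangle-free axiom these $k$ families are pairwise disjoint and disjoint from $\{\ell_1,\dots,\ell_k\}$. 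Hence
\[
(\rho+1)(t+1)\ \ge\ k+k\bigl(t+\rho-k+1\bigr)\ =\ (t+1-\beta(Q))\bigl(\rho+1+\beta(Q)\bigr),
\]
which rearranges to $\beta(Q)\bigl(t-\rho-\beta(Q)\bigr)\le 0$. Thus every point $Q$ on a free line has $\beta(Q)=0$ or $\beta(Q)\ge t-\rho$. If some free line $\ell$ had no addable point, then $\beta(P)\ge t-\rho$ for all $P\in\ell$, and summing over the $s+1$ points of $\ell$ gives $st-\rho=\sum_{P\in\ell}\beta(P)\ge(s+1)(t-\rho)$, i.e.\ $s\rho\ge t$, contradicting $\rho<t/s$. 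That is the counting inequality you allude to; without it, step~4 is unproved and the proof is incomplete.
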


For some particular GQs, extendability beyond the given bound is known. For other GQs, no better bound is known, or examples of 
maximal partial ovoids reaching the upper bound, are known. For an overview, we refer to \cite{DBKM:2012}.

Applied to the GQ $T_2^*(\mathcal{H})$, $\mathcal{H}$ a hyperoval of $\pg(2,q)$, Theorem~\ref{th:partial} yields that a partial ovoid 
of $T_2^*(\mathcal{H})$ of size $q^2-2$ can always be extended. The proof of Theorem~\ref{th:partial} is of combinatorial nature, and
can be generalized to study partial ovoids of partial geometries. However, for the partial geometries $T_2^*(\K)$ with $\alpha \geq 2$, 
such an approach only yields extendability of partial ovoids with deficiency one. In the context of this paper, we can study extendability
of partial ovoids of the partial geometry $T_2^*(\K)$ as a direction problem. Indeed, if a set of points $\B$ is a (partial) ovoid, then
no two points of $\B$ determine a line of the partial geometry $T_2^*(\K)$. Hence the projective line determined by two points of $\B$
must not contain a point of $\K$, in other words, the set of points $\B$ is a set of affine points, not determining the points of $\K$ at 
infinity. 

Considering a partial ovoid $\B$ of size $q^2-2$, we can apply Theorem~\ref{general_n}. Clearly, the non-determined directions, which contain
the points of $\K$, do not satisfy the conditions when $\B$ is not extendable. Hence, we immediately have the following corollary.

\begin{cor}
Let $\B$ be a partial ovoid of size $q^2-2$ of the partial geometry $T_2^*(\K)$, then $\B$ is always extendable to an ovoid. 
\end{cor}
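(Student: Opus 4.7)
The plan is to translate the partial ovoid condition into the language of directions in $\ag(3,q)$, apply Theorem~\ref{general_n} with $\varepsilon=2$, and discard the two non-extendable alternatives by comparing the size of $\K$ with the size of the configurations it would have to inhabit.

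First I identify the points of $T_2^*(\K)$ with $\ag(3,q)\subset\pg(3,q)$, so that $H_\infty$ is the plane containing $\K$. A partial ovoid $\B$ is then an affine point set such that the line through any two of its points meets $H_\infty$ outside $\K$; equivalently, each point of $\K$ is a non-determined direction of $\B$, so $\K\subseteq N$. Since $|\B|=q^{3-1}-2$, Theorem~\ref{general_n} yields three alternatives: (a) $\B$ extends to a set $\bar\B$ of size $q^2$ determining the same directions, and then $\bar\B$ is automatically an ovoid of $T_2^*(\K)$; (b) $N$ is a collinear set with $|N|\leq\lfloor(q+3)/2\rfloor$; or (c) $N$ lies on a planar conic.

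It remains to rule out (b) and (c) using $\K\subseteq N$. Every non-trivial maximal $\{k,d\}$-arc satisfies $d\geq 2$ and hence $|\K|=(d-1)(q+1)+1\geq q+2$; the trivial arc $\{s^2,s\}$ has $|\K|=q^2$, which is even larger. In (b) one would need $q+2\leq\lfloor(q+3)/2\rfloor$, which fails for every $q\geq 2$; in (c) one would need $\K$ to lie on a conic of $\pg(2,q)$, a curve of only $q+1$ points, again too few. There is really no serious obstacle beyond this size bookkeeping, so alternative (a) is forced and the corollary follows.
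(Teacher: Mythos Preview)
Your argument is correct and matches the paper's approach exactly: translate the partial-ovoid condition into non-determined directions, invoke Theorem~\ref{general_n}, and eliminate the collinear and conic alternatives because $\K\subseteq N$ while $|\K|\geq q+2$ exceeds both $\lfloor(q+3)/2\rfloor$ and $q+1$. The paper states this more tersely (``the non-determined directions, which contain the points of $\K$, do not satisfy the conditions when $\B$ is not extendable''), whereas you spell out the size comparison, but the content is identical.
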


This result is the same as Theorem~\ref{th:partial} for the GQ $T_2^*(\mathcal{H})$, $\mathcal{H}$ a hyperoval of $\pg(2,q)$, $q > 2$.

\section*{Acknowledgements}
The authors are grateful to the anonymous referee for his valuable remarks on section 3. The first author thanks the department of 
Computer Science at E\"otv\"os Lor\'and University in Budapest, and especially P\'eter Sziklai,  Tam\'as Sz\H{o}nyi and Zsuzsa Weiner 
for their hospitality.




\begin{thebibliography}{10}

\bibitem{Ball:2011}
S.~Ball.
\newblock The polynomial method in {G}alois geometries.
\newblock In J.~De~Beule and L.~Storme, editors, {\em Current research topics
  in Galois geometry}, Mathematics Research Developments, chapter~5, pages
  105--130. Nova Sci. Publ., New York, 2012.

\bibitem{BBM1997}
S.~Ball, A.~Blokhuis, and F.~Mazzocca.
\newblock Maximal arcs in {D}esarguesian planes of odd order do not exist.
\newblock {\em Combinatorica}, 17(1):31--41, 1997.

\bibitem{Bose1963}
R.~C. Bose.
\newblock Strongly regular graphs, partial geometries and partially balanced
  designs.
\newblock {\em Pacific J. Math.}, 13:389--419, 1963.

\bibitem{CDBS2012}
K.~Coolsaet, J.~De~Beule, and A.~Siciliano.
\newblock {The known maximal partial ovoids of size $q^2-1$ of
  $\mathrm{Q}(4,q)$}.
\newblock {\em J. Combin. Des.}, 21(3): 89 -- 100, 2013.

\bibitem{Cossu1961}
A.~Cossu.
\newblock Su alcune propriet{\`a} dei {$\{k,\,n\}$}-archi di un piano
  proiettivo sopra un corpo finito.
\newblock {\em Rend. Mat. e Appl. (5)}, 20:271--277, 1961.

\bibitem{DeBeule2012}
J.~De~Beule.
\newblock On large maximal partial ovoids of the parabolic quadric
  $\mathrm{Q}(4,q)$.
\newblock {\em Des. Codes Cryptogr.}, DOI 10.1007/s10623-012-9629-y, 2012.

\bibitem{DBG2008}
J.~De~Beule and A.~G{\'a}cs.
\newblock Complete arcs on the parabolic quadric {${\rm Q}(4,q)$}.
\newblock {\em Finite Fields Appl.}, 14(1):14--21, 2008.

\bibitem{DBKM:2012}
J.~De~Beule, A.~Klein, and K.~Metsch.
\newblock Substructures of finite classical polar spaces.
\newblock In J.~De~Beule and L.~Storme, editors, {\em Current research topics
  in Galois geometry}, Mathematics Research Developments, chapter~2, pages
  35--61. Nova Sci. Publ., New York, 2012.

\bibitem{DCDFG1991}
F.~De~Clerck, A.~Del~Fra, and D.~Ghinelli.
\newblock Pointsets in partial geometries.
\newblock In {\em Advances in finite geometries and designs ({C}helwood {G}ate,
  1990)}, Oxford Sci. Publ., pages 93--110. Oxford Univ. Press, New York, 1991.

\bibitem{DCD:2011}
F.~De~Clerck and N.~Durante.
\newblock Constructions and characterizations of classical sets in $\mathrm{PG}(n,q)$.
\newblock In J.~De~Beule and L.~Storme, editors, {\em Current research topics
  in Galois geometry}, Mathematics Research Developments, chapter~1, pages
  1--33. Nova Sci. Publ., New York, 2012.

\bibitem{DCVM1995}
F.~De~Clerck and H.~Van~Maldeghem.
\newblock Some classes of rank {$2$} geometries.
\newblock In {\em Handbook of incidence geometry}, pages 433--475.
  North-Holland, Amsterdam, 1995.

\bibitem{DWT2010}
S.~De~Winter and K.~Thas.
\newblock Bounds on partial ovoids and spreads in classical generalized
  quadrangles.
\newblock {\em Innov. Incidence Geom.}, 11:19--33, 2010.

\bibitem{Denniston1969}
R.~H.~F. Denniston.
\newblock Some maximal arcs in finite projective planes.
\newblock {\em J. Combinatorial Theory}, 6:317--319, 1969.

\bibitem{PT:2009}
S.~E. Payne and J.~A. Thas.
\newblock {\em Finite generalized quadrangles}.
\newblock EMS Series of Lectures in Mathematics. European Mathematical Society
  (EMS), Z\"urich, second edition, 2009.

\bibitem{Segre1955}
B.~Segre.
\newblock Ovals in a finite projective plane.
\newblock {\em Canad. J. Math.}, 7:414--416, 1955.

\bibitem{Szonyi1996}
T.~Sz{\H{o}}nyi.
\newblock On the number of directions determined by a set of points in an
  affine {G}alois plane.
\newblock {\em J. Combin. Theory Ser. A}, 74(1):141--146, 1996.

\end{thebibliography}

\IfFileExists{DBSZT_references.tex}{%
\newwrite\outputstreamc
\immediate\openout\outputstreamc=content.tex
}{}

\newwrite\outputstreamd
\immediate\openout\outputstreamd=DBSZT_references.tex

Address of the authors:\\

\noindent Jan De Beule. Ghent University, Department of  Mathematics, Krijgslaan 281-S22, B--9000 Ghent, Belgium\\
jdebeule@cage.ugent.be, \url{http://cage.ugent.be/~jdebeule}\\
and\\
Vrije Universiteit Brussel, Department of Mathematics, Pleinlaan 2, B--1050 Brussel, Belgium\\

\noindent P\'eter Sziklai and Marcella Tak\'ats.
E\"otv\"os Lor\'and University, Institute of Mathematics, P\'azm\'any P\'eter s\'et\'any 1/C, H–1117 Budapest, Hungary\\
\{sziklai,takats\}@cs.elte.hu.

\end{document}